\numberwithin{equation}{section}
\theoremstyle{plain}
\newtheorem{lemma}{Lemma}
\newtheorem{corollary}{Corollary}
\begin{document}
\title{About Some Family of Elliptic Curves}
\author{K.~M.Bugajska}
\address{Department of Mathematics and Statistics,
York University,
Toronto, ON, M3J 1P3}
\email{bugajska@yorku.ca}
\date{\today}
\begin{abstract}
We examine the moduli space $\mathcal{E}\cong{\textbf{T}^{*}}$ of complex tori ${\textbf{T}(\tau)}\cong{\mathbb{C}/L(\tau)}$ where ${L_(\tau)}={const\cdot{\eta^2(\tau)L_{\tau}}}$. We find that the Dedekind  eta function furnishes a bridge between the euclidean and hyperbolic structures on $\textbf{T}^{*}\cong{\mathbb{C}-L_0/L_0}$ as well as between the doubly periodic Weierstrass function $\wp$ on $\textbf{T}^{*}$ and the theta function for the lattice $E_8$. The former one allows us to rewrite the Lame equation for the Bers embedding of $\mathcal{T}_{1,1}$ in a new form. We show that $L_0$ has natural decomposition into 8 sublattices (each equivalent to $L_0$), together with appropriate half-points and that this leads to local functions of the form $\vartheta^8_l(0,\tau_{\alpha})$ for a local map $(U_{\alpha},\tau_{\alpha})$ and to a relation with $E_8$.
\end{abstract}

\subjclass[2010]{Primary 11F06; Secondary 30C99}

\maketitle

\section{Introduction}
 We have shown in ~\cite{KMA10}  that the natural algebraic structures associated to the punctured torus $\textbf{T}^{*}\cong{\textit{H}/\Gamma'}$, (here $\Gamma'$ is the commutator group of the modular group ${\Gamma}=SL_{2}\mathbb{Z}$,   ${\Gamma'}=[\Gamma,\Gamma]$) viewed as the Veech modular curve of complex tori, produce exactly the generating matrix for the binary error correcting Golay code $\textit{G}_{24}$. This is a reason why in this paper we investigate the (on the other hand well known) punctured torus $\textbf{T}^{*}$ more carefully. We will find that the Dedekind eta function $\eta$ plays very important role. It furnishes not only a bridge between the hyperbolic and euclidean geometries on $\textbf{T}^{*}$ but it also  connects (see the formula $(5.5)$) the doubly-periodic Weierstrass function $\wp(p(z_{\alpha}),L_0)$ on $\textbf{T}^{*}$ with the theta function for the lattice $E_8$, that is with ${\Theta_{E_8}(\tau_{\alpha})}={\sum_{m=0}^{\infty}r_{E_8}(m)q_{\alpha}^m}$ (here  $\tau_{\alpha}={\tau_{\alpha}(x')}$, $z_{\alpha}=p(\tau_{\alpha})$,  ${x'}\in{U_{\alpha}}\subset{\textbf{T}^{*}}$ and $r_{E_8}(m)$ is the number of elements $\underline{v}\in{E_8}$ such that $\underline{v}\cdot{\underline{v}}=2m$).  

Since the Veech modular curve $\textbf{T}^{*}$ naturally carries the modular $\textit{J}$-invariant we may view each of the objects $\textit{G}_{24}$ and $E_8$ as a sort of a hidden structure associated to the Klein $\textit{J}$-function that is encoded in the projection $\textit{J}: {\textbf{T}^{*}}\rightarrow{Y(1)}={\textit{H}/\Gamma}$.  

In ~\cite{KMB10} we have shown that, similarly to strong consequences coming from  relations between $\Gamma'$ and the subgroups $\Gamma(2)$,  $\Gamma(3)$,  $\Gamma_c$ and  $\Gamma^{+}_{ns}(3)$ of the modular group $\Gamma$ (and investigated in this note) the relations between  ${\Gamma}=SL_{2}\mathbb{Z}$ and $\Gamma_0(p)$ (for the supersingular primes) introduce  a hidden structure asociated to the $\textit{J}$-function whose the full symmetry group $\mathcal{K}$ must have the order that is devided by each of these primes $p$. Since the full automorphism group of $\textit{G}_{24}$  (given by the Matieu group $\mathcal{M}_{24}$) must be a subgroup of $\mathcal{K}$, the conditions that $p||\mathcal{K}|$ together with the requirement that $\mathcal{K}$ is a simple group implies that $\mathcal{K}$ has to be the monster group $\textbf{M}$.

We will start with the family of lattices ${L(\tau)}={const\cdot{\eta^2(\tau)L_{\tau}}}$ on $\textit{H,}$ where $L_{\tau}=[1,\tau]$  and we will show that the moduli space for complex tori ${\textbf{T}(\tau)}\cong{\mathbb{C}/L(\tau)}$ is an elliptic open curve $\mathcal{E}: t^2=4u^3-1$. Using the ramification scheme for appropriate natural projections we obtain that $u(\tau)$ and $t(\tau)$ coicide with the absolute invariants for $\Gamma^{+}_{ns}(3)$ and for $\Gamma_c$ respectively as well as that the curve $\mathcal{E}$ is analytically isomorphic to ${\textbf{T}^{*}}\cong{\textit{H}/\Gamma'}\cong{\mathbb{C}-L_0/L_0}$.  In  section 3 we find  relations $dz_{\alpha}=s\eta^4(\tau_{\alpha})d\tau_{\alpha}$, (s is a global constant) between local coordinates on $\textbf{T}^{*}$ and we investigate their consequences. We introduce some Hecke operators and we find their images on some, important for us , automorphic functions and forms. The expression of the standard holomorphic quadratic differential $(dz_{\alpha}^2)_{\alpha}$ on  $\textbf{T}^{*}$ in terms of the Dedekind eta function allows us to find  the Bers embedding of $\mathcal{T}_{1,1}$ using the equation$(3.22)$ instead of  working with the Lame equation $(3.23)$.  In section 4 we investigate different realizations $(4.3)$ of the  quotient $\widetilde{\Gamma}/\Gamma'$, ($\widetilde{\Gamma}=PSL_{2}\mathbb{Z}$) that are naturally associated to the standard quadrilateral $\mathfrak{F}'_4$ and hexagonal $\mathfrak{F}'_6$ fundamental domains for $\Gamma'$ respectively. Only the latter one determines very important (although a non-unitary) representation of $\Gamma$ in the $2$-dimentional vector space spanned by the Weierstrass functions $\wp$ and $\wp'$ on $\textbf{T}^{*}$. In section 5 we construct the decomposition of the lattice ${{L_0}\cong{p(\Gamma'(\infty))}}$, (p is the natural projection $p:{\textit{H}\rightarrow{\mathbb{C}-L_0}}$) into eight disjoint subsets $\widetilde{\mathcal{L}}_k$. The symmetries of the lattice $L_0$ allow us to realize each $\widetilde{\mathcal{L}}_{k}$, $k=1,\ldots,8$,  as a sublattice of $L_0$ (which  is the 4-dilate of $L_0$) together with its appropriate half-points in three distinct ways. In section 6 we investigate conclusions of these decompositions and we find some sort of hidden $E_8$-symmetry on $\textbf{T}^{*}$.

\section{Preliminaries}
\subsection{Curve $\mathcal{E}$}
Each element $\tau$ of the upper half-plane $\textit{H}$ determines a lattice ${L_{\tau}}=[1,\tau]$ and a complex torus ${\textbf{T}_{\tau}}={\mathbb{C}/L_{\tau}}$.  However,  we will consider, instead of the standard family $\{\textbf{T}_{\tau}\}_{{\tau}\in{\textit{H}}}$ of compact complex tori,  a family $\{{\textbf{T}(\tau)}={\mathbb{C}}/L(\tau)\}_{{\tau}\in{\textit{H}}}$ where  $L(\tau)=\mu(\tau)L_{\tau}$, $\mu(\tau)=2\pi3^{-\frac{1}{4}}{\eta}^{2}(\tau)$ and  $\eta(\tau)$ is the standard Dedekind eta function. Now,  each torus $\textbf{T}(\tau)$ is analytically isomorphic to the curve 
\begin{equation}
E_{L(\tau)}:  Y^2 = 4X^3-g_{2}(L(\tau))X-g_{3}(L(\tau))
\end{equation} 
and we will define  a function $u(\tau)$ as given by $u(\tau):=\frac{1}{3\sqrt[3]{4}}g_{2}(L(\tau))$ and the function $t(\tau):=g_{3}(L(\tau))$. We have 
\begin{equation}
g_{2}(L(\tau))={\mu(\tau)}^{-4}g_{2}(\tau)={\frac{3}{(2\pi)^4}}{\frac{g_{2}(\tau)}{{\eta(\tau)}^8}} 
\end{equation}
and
\begin{equation}
g_{3}(L(\tau))={\mu(\tau)}^{-6}g_3(\tau)=\frac{3^{\frac{3}{2}}}{(2\pi)^6}{\frac{g_3(\tau)}{{\eta(\tau)}^{12}}}
\end{equation}
where $g_k(\tau)=g_k(L_{\tau})$ for $k=2,3$  are the standard Eisenstein series.  We see that the functions $u(\tau)$ and $t(\tau)$ satisfy the equation $4u^3-t^2-1=0$ and hence determine an elliptic open curve
\begin{equation}
\mathcal{E} :  t^2=4u^3-1
\end{equation}
Each point ${(u(\tau),t(\tau))}\in{\mathcal{E}}$ corresponds to a curve 
\begin{equation}
E_{u,t} :  Y^2 = 4X^3-3{\sqrt[3]{4}}uX-t
\end{equation}

When point $P=(u,t)$ of $\mathcal{E}$ has both coordinates different from zero then there exist exactly six distict points $({\rho}^{k}u,{\pm}{t})$  ($k=0,1,2$, $\rho=e^{\frac{2\pi{i}}{3}}$)  on $\mathcal{E}$ which correspond to six isomorphic elliptic curves representing the same equivalent class of complex tori. When $P=(u,0)$ then we must have $u=4^{-\frac{1}{3}}\rho^k$ and  points $({\rho}^{k}4^{-\frac{1}{3}},0)$ with $k=0,1,2$ correspond to three isomorphic curves representing the equivalence class $[\mathbb{C}/\mathbb{Z}[i]]$. When $P=(0,t)$ then we must have $t={\pm}i$ and both curves $E_{0,{\pm}i}$ represent the equivalence class $[{\mathbb{C}}/{\mathbb{Z}[\rho]}]$ of tori. (Here the square bracket denotes the equivalence class of complex tori i.e. a point of the modular space $\textit{H}/\Gamma$, $\Gamma=SL_{2}\mathbb{Z}$.) 

From the form of the equation $(2.4)$ the elliptic curve $\mathcal{E}$ is itself  analytically isomorphic to a complex torus that belongs to the class $[{\mathbb{C}}/{\mathbb{Z}[\rho]}]$. Since both functions $u(\tau)$ and $t(\tau)$ have the hyperbolic nature to find their realizations in terms of the Weierstrass functions $\wp$ and ${\wp}'$ (which belong to the flat geometry) we must consider the relationships between $\mathcal{E}$ and some modular curves of level 2 and of level 3 structures respectively.

\subsection{$\Gamma'$, $\Gamma_c$ and $\Gamma(2)$}.
Let $r_N$ denote the modulo N homomorphism $r_N$:  ${SL_{2}\mathbb{Z}}\rightarrow{SL_{2}(N)}$. The image ${r_{2}(\Gamma)}={SL_{2}(2)}\cong{S_3}$ whereas the image of ${\Gamma'}=[\Gamma,\Gamma]$ is the normal subgroup of $S_3$ given by ${\mathcal{C}_3}\cong{\mathbb{Z}_3}$. Let $\Gamma_c$ denote the subgroup ${r_{2}}^{-1}(\mathcal{C}_3)$ of $\Gamma$. It has genus zero, it has only one cusp of width 2 and it has index 2 in $\Gamma$.  Moreover  we may take $\{I,T\}$ as a set of its coset representatives in $\Gamma$, $T=\begin{pmatrix} 1&1\\ 0&1\end{pmatrix}$. Let $\textbf{T}^{*}$ denote the punctured torus $\textit{H}/{\Gamma}'$ which is analyticaly isomorphic to ${\mathbb{C}-L_0}/{L_0}$ for some lattice ${L_0}={const}\cdot{L_\rho}$ and let $\textbf{X}'$  be ${{\textbf{T}^{*}}\cup{\{\infty\}}}\cong{\textit{H}^{*}}/{\Gamma}'$ where $\textit{H}^{*}$ denotes the extended half-plane ${\textit{H}}\cup{\mathbb{Q}}\cup{\{\infty\}}$.  We have the following natural projections: ${\textbf{X}'}\stackrel{{{\pi}'}_c}{\rightarrow}{\textbf{X}_c}\stackrel{\pi_c}{\rightarrow}{\textbf{X}(1)}$ with ${\textbf{X}_c}\cong{{\textit{H}^{*}}/{\Gamma_c}}$, ${\textbf{X}(1)}\cong{{\textit{H}^{*}}/{\Gamma}}$  with projections ${\pi}'_c$ of degree 3 and $\pi_c$ of degree 2. The absolute invariant for $\Gamma_c$ is given by  ${\textit{J}_{c}(\tau)}=(\textit{J}(\tau)-1)^{\frac{1}{2}}$, ~\cite{JPS97},  and it   is also $\Gamma'$ invariant ( using $(2.3)$ it  may be identified with the function $t(\tau)$).  The comparison of the ramification scheme for ${{\pi}'_c}$: ${\textbf{X}'\rightarrow}{\textbf{X}_c}$ and for ${\wp}'$: ${\textbf{X}'}\rightarrow{\mathbb{C}P_1}$ implies that  (after the identification of $\mathbb{C}P_1$ with $\textit{J}_c$-plane $\textbf{X}_c$) the $\Gamma'$-automorphic function $t(\tau)$ coincides with the lifting to $\textit{H}$ of the function ${\wp}'$ on $\textbf{T}^{*}$. In other words we have shown that the following is true: 
\begin{lemma}
Let $p$: ${\textit{H}}\rightarrow{\textit{H}/N}$ be the natural projection coresponding to  the group  $N=[\Gamma',\Gamma']$ with ${\textit{H}/N}\cong{\mathbb{C}-L_0}$. The lifting of ${\wp}'(z,L_0)$ on $\mathbb{C}-L_0$ to $\textit{H}$ determined by $p$ produces exactly the $\Gamma'$-automorphic function $t(\tau)$.
\end{lemma}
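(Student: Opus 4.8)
The plan is to establish the coincidence of $t(\tau)$ with the lifted Weierstrass function $\wp'$ by comparing their ramification/branching behavior over the base, exactly as the paragraph preceding the lemma indicates. The key conceptual point is that $\mathbf{T}^* \cong H/\Gamma'$ carries two a priori different $\Gamma'$-automorphic objects that both have the correct transformation properties to descend from $H$: on one side, the modular function $t(\tau) = (J(\tau)-1)^{1/2}$ (which we already know from Subsection 2.2 is $\Gamma_c$-invariant, hence $\Gamma'$-invariant), and on the other side, the lift to $H$ of the Weierstrass derivative $\wp'(z,L_0)$ under the projection $p : H \to H/N \cong \mathbb{C}-L_0$. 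Both are meromorphic functions on $\mathbf{X}' \cong H^*/\Gamma'$, so to prove they coincide it suffices to show that they define the same map to $\mathbb{C}P^1$ up to the identifications already fixed.

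First I would make precise the two maps and their target. Using the natural projection $\pi'_c : \mathbf{X}' \to \mathbf{X}_c$ of degree $3$, together with the identification of $\mathbf{X}_c$ with the $J_c$-plane $\cong \mathbb{C}P^1$, the function $t(\tau)$ is realized as a degree-$3$ map $\mathbf{X}' \to \mathbb{C}P^1$. On the flat side, $\wp' : \mathbf{T}^* \to \mathbb{C}P^1$ is likewise a degree-$3$ map of the torus (since $\wp'$ has a single pole of order $3$ at the lattice point). The strategy is to compute the ramification data of each map — the location and multiplicities of the branch points over $0, \infty$, and the other critical values — and verify that the two ramification schemes agree. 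For $\wp'$ the critical points are the three nontrivial half-period points of $L_0$ where $\wp'$ vanishes to order one and the puncture where it has its triple pole; for $\pi'_c$ the ramification is dictated by the elliptic points and cusps of $\Gamma_c$ pulled back through the degree-$3$ cover. Matching these under the analytic isomorphism $\mathbf{T}^* \cong H/\Gamma'$ identifies the two maps up to a Möbius automorphism of $\mathbb{C}P^1$.

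Next I would pin down the remaining Möbius ambiguity. Two holomorphic maps of the same degree from $\mathbf{X}'$ to $\mathbb{C}P^1$ with identical ramification schemes differ by post-composition with an automorphism of the target; to eliminate it I would normalize using the distinguished points, namely by matching the values (or the leading Laurent/Taylor coefficients in a local coordinate $z_\alpha$) at the puncture and at the half-period points. The relation $(2.3)$ expressing $g_3(L(\tau))$ in terms of $\eta(\tau)$ fixes the constant and the normalization of $t(\tau)$, and the corresponding normalization of $\wp'$ comes from the explicit scaling $L_0 = \mathrm{const}\cdot L_\rho$; requiring agreement of the principal parts at the single common pole removes the scaling, and agreement at one further point removes the residual constant.

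\emph{The main obstacle} I anticipate is controlling the normalizing constants and the precise identification $\mathbf{X}_c \cong \mathbb{C}P^1$ so that the Möbius factor is genuinely trivial rather than merely an abstract automorphism: the ramification comparison only shows the two maps agree up to $\mathrm{PGL}_2(\mathbb{C})$, and converting this into a literal equality $t(\tau) = (\text{lift of }\wp')$ requires carefully tracking the factors of $2\pi$, $3^{-1/4}$, and the powers of $\eta$ coming through $(2.2)$ and $(2.3)$, as well as verifying that the cusp of $\Gamma_c$ of width $2$ maps to the correct point. I would handle this by reducing everything to a local computation in the coordinate $z_\alpha$ near the puncture, where both functions have explicit Laurent expansions, and checking that leading coefficients match after the prescribed normalization.
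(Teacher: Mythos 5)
Your strategy is the same as the paper's: the paragraph preceding the lemma proves it by identifying $t(\tau)$ with the Hauptmodul $\textit{J}_c=(\textit{J}-1)^{\frac{1}{2}}$ via $(2.3)$ and then comparing the ramification scheme of $\pi'_c:\textbf{X}'\rightarrow\textbf{X}_c$ with that of $\wp':\textbf{X}'\rightarrow\mathbb{C}P_1$, and your explicit handling of the residual M\"obius ambiguity is a point the paper in fact glosses over. However, there is a genuine error in the ramification data you propose to feed into this comparison. The three half-period points are \emph{simple} zeros of $\wp'$, and a point where a function vanishes to order one is by definition not a critical point; indeed, a degree-$3$ map cannot be ramified at three distinct points lying in a single fiber. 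The actual critical points of $\wp'$, besides the triple pole at the puncture, are the zeros of $\wp''=6\wp^2-\tfrac{1}{2}g_2(L_0)=6\wp^2$ (recall $g_2(L_0)=0$ because $L_0=c[1,\rho]$ is hexagonal), i.e.\ the two points of $\textbf{T}^{*}$ where $\wp=0$. At such a point $\wp'''=12\wp\wp'$ vanishes as well, so the ramification index is $3$, and the critical values are $\wp'=\pm i$, since $(\wp')^2=4\wp^3-1=-1$ there.

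With the corrected data the comparison does close up: $\wp'$ is totally ramified exactly over $\{i,-i,\infty\}$ and unramified elsewhere, consistent with Riemann--Hurwitz ($2\cdot 1-2=3(0-2)+3\cdot 2$), while $\pi'_c$ is totally ramified exactly over the cusp of $\textbf{X}_c$ (width $6$ over width $2$) and over its two elliptic points of order $3$ (forced because $\Gamma'$ is torsion-free), which sit over $\textit{J}=0$ and hence at $\textit{J}_c=\pm i$. The half-periods are not branch points at all: they form the unramified fiber of $\wp'$ over $0$, matching the three points $i'_1,i'_2,i'_3$ of $\textbf{X}'$ lying over $\textit{J}=1$, i.e.\ over $\textit{J}_c=0$. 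As written, your version would compare ``$\wp'$ branched over $0$ and $\infty$'' against ``$\pi'_c$ branched over $\infty,\pm i$,'' and the schemes would simply fail to match, so the step as stated would collapse; once the critical points are corrected, your normalization argument (matching principal parts at the puncture and the fiber over $0$) goes through and completes the paper's sketch.
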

At this point it is worth to notice that (since the  modulo 2 homomorphism maps  both groups  $\left\langle g\right\rangle$ and $\left\langle a\right\rangle$ onto $\mathcal{C}_3$ and since $a$ is $\Gamma(2)$-equivalent to $g^2$ and $a^2$ is $\Gamma(2)$-equivalent to $g$) we may view the modular curve $\textbf{X}_c$ (of $\mathcal{C}_3$-equivalent level two structures) as the quotient  ${\textbf{X}(2)}/{\mathcal{C}_3}$  (here $g=ST$, $a=TS$, $S=\begin{pmatrix}0&1\\-1&0\end{pmatrix}$ ).

\subsection{$\Gamma'$, $\Gamma(3)$ and $\Gamma^{+}_{ns}(3)$}                          A similar situation occurs when we pass to the modulo 3 homomorphism. The image $r_3(\Gamma')$  is the normal subgroup of ${\Gamma/{\Gamma(3)}}\cong{SL_2(3)}$ but now this subgroup is not an abelian one. It is isomorphic to the quaternion group $Q_8$ and we have 
\begin{equation}
\xymatrix{
1\ar[r] &Q_8\ar[r]& SL_2(3)\ar[r] & \mathbb{Z}_3\ar[r] &1\\
}
\end{equation}

The subgroup ${r_3}^{-1}(Q_8)$ of $\Gamma$ is associated to the non-split Cartan subgroup of $GL_2(3)$ and is usually denoted by $\Gamma^{+}_{ns}(3)$, ~\cite{ICH99}. It has index 3 in $\Gamma$ and we may take the set $\{I,T,T^2\}$ as a set of its coset representatives. The modular curve ${\textbf{X}^{+}_{ns}}(3)$ of $Q_8$-equivalent level 3 structures (in fact, since the normal subgroup $\mathcal{N}$ of $Q_8$ acts trivially, these structures are ${Q_8}/{\mathcal{N}}$ equivalent) has genus zero and only one cusp of width 3. 

The absolute invariant for $\Gamma^{+}_{ns}(3)$ can be taken as ${\textit{J}_n(\tau)}={\textit{J}(\tau)}^{\frac{1}{3}}$, ~\cite{BB09},  and hence this uniformizer of ${\textbf{X}^{+}_{ns}}(3)$  coincides with the $\Gamma'$-automorphic function  ${\sqrt[3]{4}}u(\tau)$ introduced earlier. Taking into account the ramification scheme given by Pict.1  
\vspace{12pt}
\begin{pspicture}(-0.5,-1.1)(10,3.5)
\rput(-0.1,3.2){\rnode{a}{$\textbf{X}'$}}
\rput(-0.1,1.7){\rnode{b}{${\textbf{X}^{+}_{ns}}(3)$}}
\rput(-0.1,0){\rnode{c}{$\textbf{X}(1)$}}
\rput(2,3.2){\rnode{f}{${\infty}'$}}
\rput(2,1.7){\rnode{e}{$\infty_1$}}
\rput(2,0){\rnode{d}{$\infty$}}
\rput(3,3.2){\rnode{i}{$\rho_1$}}
\rput(4,3.2){\rnode{j}{$\rho_2$}}
\rput(3.5,1.7){\rnode{h}{$\rho$}}
\rput(3.5,0){\rnode{g}{0}}
\rput(5,3.2){\rnode{k}{$i_1'$}}
\rput(5.5,3.2){\rnode{l}{$i_2'$}}
\rput(6,3.2){\rnode{m}{$i_3'$}}
\rput(5,1.7){\rnode{n}{$i_1$}}
\rput(5.5,1.7){\rnode{o}{$i_2$}}
\rput(6,1.7){\rnode{p}{$i_3$}}
\rput(5.5,0){\rnode{q}{1}}
\rput(7.5,3.2){\rnode{r}{$x_1'$}}
\rput(8,3.2){\rnode{s}{$x_1''$}}
\rput(8.5,3.2){\rnode{t}{$x_2'$}}
\rput(9,3.2){\rnode{u}{$x_2''$}}
\rput(9.5,3.2){\rnode{v}{$x_3'$}}
\rput(10,3.2){\rnode{z}{$x_3''$}}
\rput(7.7,1.7){\rnode{w}{$x_1$}}
\rput(8.7,1.7){\rnode{x}{$x_2$}}
\rput(9.7,1.7){\rnode{y}{$x_3$}}
\rput(8.7,0){\rnode{Q}{$x$}}
\psline(-0.1,3)(-0.1,1.9)
\psline(-0.1,1.5)(-0.1,0.2)
\psline(2,3)(2,1.9)
\psline(2,1.5)(2,0.2)
\psline(3,3)(3.3,1.9)
\psline(4,3)(3.7,1.9)
\psline(3.5,1.5)(3.5,0.2)
\psline(5,3)(5,1.9)
\psline(5.5,3)(5.5,1.9)
\psline(6,3)(6,1.9)
\psline(5,1.5)(5.4,0.2)
\psline(5.5,1.5)(5.5,0.2)
\psline(6,1.5)(5.6,0.2)
\psline(7.5,3)(7.6,1.9)
\psline(8,3)(7.8,1.9)
\psline(8.5,3)(8.6,1.9)
\psline(9,3)(8.8,1.9)
\psline(9.5,3)(9.6,1.9)
\psline(10,3)(9.8,1.9)
\psline(7.7,1.5)(8.6,0.2)
\psline(8.7,1.5)(8.7,0.2)
\psline(9.7,1.5)(8.8,0.2)
\rput(-0.4,2.6){\rnode{A}{2}}
\rput(-0.4,1.1){\rnode{B}{3}}
\rput(1.7,2.6){\rnode{C}{2}}
\rput(1.7,1.1){\rnode{D}{3}}
\rput(2.9,2.6){\rnode{E}{1}}
\rput(4.1,2.6){\rnode{F}{1}}
\rput(3.2,1.1){\rnode{G}{3}}
\rput(4.8,2.6){\rnode{H}{2}}
\rput(5.3,2.6){\rnode{I}{2}}
\rput(6.2,2.6){\rnode{J}{2}}
\rput(5,1.1){\rnode{K}{1}}
\rput(5.3,1.1){\rnode{L}{1}}
\rput(6,1.1){\rnode{M}{1}}
\rput(4.8,-1){\rnode{N}{$Pict.1$}}
\end{pspicture}

we obtain immediately:
\begin{lemma}
Let $p$ be the natural projection $\textit{H}\rightarrow{\mathbb{C}-L_0}$ introduced earlier. The lifting of the Weierstrass function $\wp(z,L_0)$ on $\mathbb{C}-L_0$ to $\textit{H}$ determined by $p$ produces exactly the $\Gamma'$-automorphic function $u(\tau)={\textit{J}_n(\tau)}$.
\end{lemma}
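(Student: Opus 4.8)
The plan is to mirror the proof of Lemma 1, replacing the level-two tower by the degree-two covering $\textbf{X}'\to\textbf{X}^{+}_{ns}(3)$ of Pict.1 and comparing its ramification with that of the classical Weierstrass $\wp$-cover. First I would record that both sides are genuinely degree-two maps $\textbf{X}'\to\mathbb{C}P_1$. Since the deck group of $p$ is $N=[\Gamma',\Gamma']$ and the quotient $\Gamma'/N$ acts on $\mathbb{C}-L_0$ by the translations of $L_0$, the $L_0$-periodic function $\wp(z,L_0)$ is $\Gamma'/N$-invariant, so its pullback $\wp\circ p$ descends to a $\Gamma'$-automorphic function on $\textbf{T}^{*}$ whose induced map $\wp\colon\textbf{T}^{*}\to\mathbb{C}P_1$ has degree two. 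On the other side, $u(\tau)$ is (up to the fixed scalar of \S2.3) the pullback under the covering of Pict.1 of the uniformizer $\textit{J}_n$ of $\textbf{X}^{+}_{ns}(3)$, so $u$ exhibits $\textbf{X}'$ as the same sort of double cover, now of the $\textit{J}_n$-sphere.

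Next I would read off the two branch divisors and check that they agree. Classically $\wp\colon\mathbb{C}/L_0\to\mathbb{C}P_1$ is branched exactly over $\{e_1,e_2,e_3,\infty\}$: the double pole at the puncture is a point of ramification index two over $\infty$, while the three nonzero half-periods are simple critical points over $e_1,e_2,e_3$. From Pict.1 the covering $\textbf{X}'\to\textbf{X}^{+}_{ns}(3)$ is ramified precisely over the cusp $\infty_1$, with single preimage $\infty'$ of index two, and over the three points $i_1,i_2,i_3$ above the order-two elliptic point of $\textbf{X}(1)$, each carrying a doubled preimage $i_j'$; it is unramified over the order-three point $\rho$ (two preimages $\rho_1,\rho_2$). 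Riemann--Hurwitz then gives $2g(\textbf{X}')-2=-4+4=0$, i.e. $g(\textbf{X}')=1$, as required. Thus both realizations present $\textbf{X}'$ as a genus-one double cover of $\mathbb{C}P_1$ branched over four points, with $\infty$ (the pole of $\wp$) matched to the cusp $\infty_1$ and $e_1,e_2,e_3$ matched to $i_1,i_2,i_3$.

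I would then invoke the rigidity of genus-one double covers: such a cover carries a unique hyperelliptic involution, so its quotient map to $\mathbb{C}P_1$ is unique up to post-composition with an element of $\mathrm{Aut}(\mathbb{C}P_1)$. Hence, once $\mathbb{C}P_1$ is identified with the $\textit{J}_n$-plane $\textbf{X}^{+}_{ns}(3)$, the functions $\wp\circ p$ and $u$ can differ only by a M\"obius transformation preserving the common four-point branch locus. Matching the pole of $\wp$ with the cusp value of $\textit{J}_n$ (both sent to $\infty$) cuts this ambiguity down to an affine scaling.

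The step I expect to be the crux is removing this last scalar, i.e. proving actual equality rather than mere projective equivalence. Here the normalizations of \S2.1 finish the argument: $L_0=\mathrm{const}\cdot L_\rho$ is the equianharmonic lattice, so $g_2(L_0)=0$ and the constant is chosen so that $g_3(L_0)=1$; consequently the Weierstrass uniformization $z\mapsto(\wp,\wp')$ maps $\mathbb{C}/L_0$ exactly onto $\mathcal{E}\colon t^2=4u^3-1$. Combining this with Lemma 1, which already identifies the lift of $\wp'$ with $t$, and the two cubic relations $(\wp')^2=4\wp^3-1$ and $t^2=4u^3-1$, one gets $4(\wp^3-u^3)=0$ identically; since $\wp\circ p$ and $u$ are degree-two automorphic functions sharing the same double pole at the cusp, this forces $\wp\circ p=u$ identically, which is the asserted $\Gamma'$-automorphic function $u(\tau)=\textit{J}_n(\tau)$. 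Equivalently, one matches the three finite branch values of $\wp$ on $L_\rho$ with the branch values $4^{-1/3}\rho^{k}$ of $u$ read off from $t^2=4u^3-1$, pinning the residual scaling. I anticipate the main obstacle to be the bookkeeping that sends the three finite branch points to the correct $i_j$ and the verification that the constant in $L_0$ is exactly the one making $g_3(L_0)=1$.
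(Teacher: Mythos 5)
Your proposal takes essentially the same route as the paper: the paper's entire proof of this lemma is the observation that (having identified $\textit{J}_n=\textit{J}^{1/3}$ with $\sqrt[3]{4}\,u(\tau)$) the ramification scheme of Pict.1 for $\textbf{X}'\rightarrow\textbf{X}^{+}_{ns}(3)$ matches that of $\wp\colon\textbf{X}'\rightarrow\mathbb{C}P_1$, exactly parallel to Lemma 1, from which it concludes ``immediately.'' Your extra steps --- Riemann--Hurwitz, the M\"obius/scaling ambiguity, and pinning the constant via Lemma 1 together with $t^2=4u^3-1$ and $(\wp')^2=4\wp^3-1$ --- are just a careful filling-in of what the paper leaves implicit (including the normalization $g_3(L_0)=1$ of the constant $c$ in $L_0=c[1,\rho]$ and the residual cube-root-of-unity freedom, both of which the paper glosses over).
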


Since functions $u(\tau)$ and $t(\tau)$ are liftings to $\textit{H}$ of the Weierstrass functions $\wp$ and $\wp'$ respectively we have the following

\begin{corollary}
An elliptic curve $\mathcal{E}$: $t^2=4u^3-1$  that forms the moduli space of elliptic curves associated to the family of lattices $\{L(\tau)=\mu(\tau)L_{\tau}\}$ with $\tau\in{\textit{H}}$  is analytically isomorphic to the punctured torus $\textbf{T}^{*}={\textit{H}/\Gamma'}\cong{\mathbb{C}-L_0/L_0}$ with isomorphism  given by $z\rightarrow(u(\tau),t(\tau),1)$ for any  $\tau$ with the property  that $p(\tau)\in{z+L_0}$. 
\end{corollary}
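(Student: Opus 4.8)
The plan is to read the asserted map as the classical Weierstrass uniformization of the torus $\mathbb{C}/L_0$, transported into the hyperbolic coordinate $\tau$ by means of Lemmas 1 and 2. Since $\mathbb{C}/L_0$ lies in the equianharmonic class $[\mathbb{C}/\mathbb{Z}[\rho]]$, the invariant $g_2(L_0)$ vanishes, so I would first fix the scaling constant in $L_0 = const\cdot L_\rho$ so that $g_3(L_0) = 1$. With this normalization the Weierstrass differential equation on $L_0$ reads $(\wp')^2 = 4\wp^3 - 1$, which is exactly the defining equation of $\mathcal{E}$ under the dictionary $u \leftrightarrow \wp$, $t \leftrightarrow \wp'$ already supplied by the two Lemmas.

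The two substantive steps come next. First I would invoke the standard fact that the Weierstrass map $\Phi\colon \mathbb{C}/L_0 \to \mathbb{C}P^2$, $z \mapsto (\wp(z,L_0) : \wp'(z,L_0) : 1)$ with $0 \mapsto (0:1:0)$, is a biholomorphism onto the projective closure of its image; restricting to the punctured torus $\mathbb{C}-L_0/L_0$ deletes the point at infinity and gives a biholomorphism onto the affine curve $\{t^2 = 4u^3 - 1\} = \mathcal{E}$. Second I would translate $\Phi$ into the $\tau$-picture: by Lemma 2 the lift of $\wp(\cdot,L_0)$ along $p$ equals $u(\tau)$, and by Lemma 1 the lift of $\wp'(\cdot,L_0)$ equals $t(\tau)$, so composing $\Phi$ with the covering $\textit{H}\to \mathbb{C}-L_0/L_0$ sends $\tau$ to $(u(\tau),t(\tau),1)$.

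Because $u$ and $t$ are $\Gamma'$-automorphic, this map factors through $\textit{H}/\Gamma' = \textbf{T}^{*}$, and for $z$ equal to the class of $p(\tau)$ (that is, $p(\tau)\in z+L_0$) it agrees with the stated formula; indeed the well-definedness of the recipe ``pick any $\tau$ with $p(\tau)\in z+L_0$'' is precisely this $\Gamma'$-invariance. Assembling the pieces, the map $\textbf{T}^{*}\to\mathcal{E}$ is the composite of the analytic isomorphism $\textit{H}/\Gamma'\cong \mathbb{C}-L_0/L_0$ with the Weierstrass biholomorphism $\Phi$, hence is itself an analytic isomorphism carrying the class of $z$ to $(u(\tau),t(\tau),1)$, as required.

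The main obstacle I anticipate is bookkeeping of the normalizing constants rather than any genuine difficulty. One must check that the factor $\mu(\tau)=2\pi3^{-\frac14}\eta^2(\tau)$ and the explicit constants in $u(\tau)=\frac{1}{3\sqrt[3]{4}}g_2(L(\tau))$ and $t(\tau)=g_3(L(\tau))$ are mutually compatible with the choice $g_3(L_0)=1$, so that the liftings in Lemmas 1 and 2 reproduce $\wp$ and $\wp'$ on the nose and not merely up to a scalar; any mismatch would rescale $\mathcal{E}$ and break the identification with its defining equation. A secondary point to confirm is the behaviour along the distinguished fibres --- the three points with $t=0$ lying over $[\mathbb{C}/\mathbb{Z}[i]]$ and the two with $u=0$ lying over $\rho$ --- where the branching recorded after $(2.5)$ occurs, so as to be sure the map remains bijective there and that the puncture of $\textbf{T}^{*}$ corresponds exactly to the point at infinity removed from the Weierstrass embedding.
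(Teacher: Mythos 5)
Your proposal is correct and follows essentially the same route as the paper: the paper's entire proof is the sentence preceding the corollary, namely that by Lemmas 1 and 2 the functions $u(\tau)$ and $t(\tau)$ are the lifts along $p$ of $\wp(\cdot,L_0)$ and $\wp'(\cdot,L_0)$, so the map $z\rightarrow(u(\tau),t(\tau),1)$ is just the classical Weierstrass embedding of $\mathbb{C}-L_0/L_0$ onto the affine curve $t^2=4u^3-1$. Your additional bookkeeping (fixing $g_2(L_0)=0$, $g_3(L_0)=1$, checking $\Gamma'$-invariance and the behaviour at the special fibres) merely makes explicit what the paper leaves implicit, since the relation $t^2=4u^3-1$ established in Section 2.1 together with the exact (not up-to-scalar) equalities in the Lemmas already forces that normalization of $L_0$.
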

Thus, the $\Gamma'$-automorphic functions $u(\tau)={\frac{1}{3\sqrt[3]{4}}}g_2(L(\tau))$ and $t(\tau)=g_3(L(\tau))$ are objects of both: of the euclidean geometry ( since $u(\tau)=\wp(p(\tau),L_0)$ and $t(\tau)=\wp'(p(\tau),L_0)$) and of the hyperbolic geometry (as $u(\tau)$ is the lifting to $\Gamma'$ of a Hauptmodule $\textit{J}_n(\tau)$ for $\Gamma^{+}_{ns}(3)$ and $t(\tau)$ is the lifting of a Hauptmodule $\textit{J}_c$ for $\Gamma_c$). In other words we have the following commutative diagrams:
\[
\xymatrix{
\textbf{X}'\ar[r]^{\wp'} \ar[d]_{\pi'_c}
&\mathbb{C}P_1\\
\textbf{X}_c\ar[ur]_{\textit{J}_c}\\
} \qquad    \xymatrix{
\textbf{X}'\ar[r]^{\sqrt[3]{4}\wp} \ar[d]_{\pi'_n}
&\mathbb{C}P_1\\
\textbf{X}^{+}_{ns}(3)\ar[ur]_{\textit{J}_n}\\
}
\]

\section{A Matter of the Dedekind Eta Function}
\subsection{Hyperbolic and Euclidean}
We have already introduced a universal covering $p$ which projects $\textit{H}$ onto the infinite punctured plane $\mathbb{C}-L_0$ with the deck group corresponding to a homomorphism of  ${\Pi_1(\mathbb{C}-L_0)}\rightarrow{N}$, $N=[\Gamma',\Gamma']$. So, ${N\tau}\Leftrightarrow{z\in\mathbb{C}-L_0}$, ${\Gamma'\tau}\Leftrightarrow{z+L_0}$ and ${L_0}=c[1,\rho]$ for some constant c. Let $r$ be the  local inverse of $p$, that is, $\{r,z\}=\frac{1}{2}\wp(z,L_0)$ (here \{\} denotes the Schwarzian derivative). Now the $\Gamma'$-automorphic functions $u$ and $t$ can be locally viewed as
\begin{equation}
u(r(z))=\wp(z,L_0) \qquad t(r(z))= \wp'(z,L_0)
\end{equation}

Let $\{(U_{\alpha},\tau_{\alpha}\}_{\alpha}$ be an atlas on ${\textbf{T}^{*}}\cong{\textit{H}/\Gamma'}$ coming from the universal covering $\pi'$: $\textit{H}\rightarrow\textbf{T}^{*}$ i.e. for $(u,t)=x'\in{U_{\alpha}}\cap{U_{\beta}}$,  we have $\tau_{\beta}(x')=\gamma\tau_{\alpha}(x')$ for some $\gamma\in\Gamma'$. Since  the multiplier system of $\eta^2(\tau)$ restricted to the subgroup $\Gamma'$ of $\Gamma$ is a trivial one, on any intersection ${U_{\alpha}}\cap{U_{\beta}}$  we obtain

\begin{equation}
L(\tau_{\beta})=\mu(\tau_{\beta})L_{\tau_{\beta}}=\mu(\tau_{\alpha})L_{\tau_{\alpha}}=L(\tau_{\alpha})
\end{equation}

This means that at each point $x'\in\textbf{T}^{*}$, $x'=(u,t)$, we have well define lattice $L(x')=L(\tau_{\alpha}(x'))=L(\tau_{\beta}(x'))$ and hence we have an analytic isomorphism between $\mathbb{C}/L(x')$ and $E_{u,t}$: $Y^2=4X^3-3\sqrt[3]{4}uX-t$.

Let us introduce another atlas $\{(U_{\alpha},z_{\alpha})\}_{\alpha}$ on $\textbf{T}^{*}$ with holomorphic bijections $z_{\alpha}$:$U_{\alpha}\rightarrow{V_{\alpha}}\subset{\mathbb{C}-L_0}$ coming from the projection $p': {\mathbb{C}-L_0}\rightarrow{\textbf{T}^{*}}$ and with the property that
\begin{equation}
\tau_{\alpha}(p'(z_{\alpha}))=r(z_{\alpha}) \qquad  z_{\alpha}(\pi'(\tau_{\alpha}))=p(\tau_{\alpha})
\end{equation}

(If necessary we may pass to some refinement of an open covering $\{U_{\alpha}\}$  of $\textbf{T}^{*}$.) Now, for each $x'\in{{U_{\alpha}}\cap{U_{\beta}}}$ we have ${\tau_{\beta}(x')}={\gamma\tau_{\alpha}(x')}$ for some ${\gamma}\in{\Gamma'}$ and ${z_{\beta}(x')}={z_{\alpha}(x')+w}$ for some $w\in{L_0}$. Since
\begin{equation*}
u(\tau_{\alpha})=\wp(p(\tau_{\alpha}),L_0)=\wp(p(\tau_{\beta}),L_0)=u(\tau_{\beta})
\end{equation*} 

and analogously
\begin{equation*}
t(\tau_{\alpha})=\wp'(z_{\alpha},L_0)=\wp'(z_{\beta},L_0)=t(\tau_{\beta})
\end{equation*}

the relation 
\begin{equation*}
t(\tau_{\alpha})=\wp'(z_{\alpha},L_0)= \frac{d\wp(p(\tau_{\alpha}),L_0)}{dp(\tau_{\alpha})}={\frac{d\wp(p(\tau_{\alpha}))}{d\tau_{\alpha}}}{\frac{d\tau_{\alpha}}{dz_{\alpha}}}={\frac{du(\tau_{\alpha})}{d\tau_{\alpha}}}{\frac{d\tau_{\alpha}}{dz_{\alpha}}}
\end{equation*}

implies that

\begin{equation}
\frac{du(\tau_{\alpha})}{d\tau_{\alpha}}=t(\tau_{\alpha}){\frac{dz_{\alpha}}{d\tau_{\alpha}}}
\end{equation}.

Since we already know that
\begin{equation*}
 u(\tau_{\alpha})={({\frac{1}{4}}\textit{J}(\tau_{\alpha}))^{\frac{1}{3}}}, \qquad  t(\tau_{\alpha})={(\textit{J}(\tau_{\alpha})-1)^{\frac{1}{2}}}
\end{equation*}
  we may use the well known formula ~\cite{KCH85}
  
\begin{equation*}
\eta^{24}(\tau)=\frac{1}{(48\pi^2)^3}{\frac{{\textit{J}'(\tau)}^6}{{(\textit{J}(\tau))^4}{(1-\textit{J}(\tau))^3}}}
\end{equation*}

to find that on ${U_{\alpha}}\subset\textbf{T}^{*}$ we have
\begin{equation}
\frac{dz_{\alpha}}{d\tau_{\alpha}}=s\eta^4(\tau_{\alpha}), \qquad s=2k\pi{\frac{\sqrt[3]{2}}{\sqrt{3}}}
\end{equation}

and  $k$ is a global constant given by a 6-th root of $-1$.  Hence, on any intersection  ${U_{\alpha}}\cap{U_{\beta}}$ on $\textbf{T}^{*}$ we have
\begin{equation}
dz_{\alpha}=s\eta^4(\tau_{\alpha})d\tau_{\alpha}=s\eta^4(\tau_{\beta})d\tau_{\beta}=dz_{\beta}
\end{equation}

as expected. We see that the Dedekind eta function provides the transition between the local flat coordinates $z_{\alpha}(x')$ and the hyperbolic $\tau_{\alpha}(x')$ coordinates on $\textbf{T}^{*}$. In other words  it plays the role of a bridge between the euclidean geometry on ${\textbf{T}^{*}}\cong{\textit{H}/\Gamma'}$ and its natural hyperbolic geometry. Moreover, from the formula $(3.6)$,  we obtain  that

\begin{equation}
 \wp(z_{\alpha},L_0)dz_{\alpha}^2=\frac{k}{3(2\pi)^3}g_2(\tau_{\alpha})d\tau_{\alpha}^2
\end{equation}

Let $q$ be the holomorphic quadratic differential  on $\textbf{T}^{*}$ that is determined by the Eisenstein series $g_2(\tau)$ i.e.  with respect to the atlas ${\{(U_{\alpha},\tau_{\alpha})\}}_{\alpha}$ it can be written  as $q={(\frac{k}{3(2\pi)}g_2(\tau_{\alpha})d\tau_{\alpha}^2)_{\alpha}}$.  Now, with respect to the atlas $\{(U_{\alpha},z_{\alpha})\}_{\alpha}$, $q$ takes the form:

\begin{equation}
q=(\wp(z_{\alpha},L_0)dz_{\alpha}^2)_{\alpha}
\end{equation}

Similarly, it is easy to check that 
\begin{equation}
t(\tau_{\alpha})= \frac{2\pi\sqrt{3}}{\sqrt[3]{2}}\frac{du(\tau_{\alpha})}{d\tau_{\alpha}}\eta^{-4}(\tau_{\alpha})
\end{equation}

and hence on each $U_{\alpha}$ we can write
\begin{equation}
g_3(\tau_{\alpha})= \frac{(2\pi)^7}{3\sqrt[3]{2}}e^{-\frac{i\pi}{6}}\eta^8(\tau_{\alpha})\frac{du(\tau_{\alpha})}{d\tau_{\alpha}}
\end{equation}

 Let $\xi$ denote the holomorphic differential on $\textbf{T}^{*}$ wich is determined by $g_3(\tau)$. The above formulae allow us to write
\begin{equation}
\xi=(\wp'(z_{\alpha},L_0)dz_{\alpha}^3)_{\alpha}=(\frac{2}{(2\pi)^9}e^{\frac{i\pi}{2}}g_3(\tau_{\alpha})d\tau_{\alpha}^3)_{\alpha}
\end{equation}

In other words we have shown the following:

\begin{lemma}
The $\Gamma'$-automorphic forms on $\textit{H}$ corresponding to the differentials $q=(\wp(z_{\alpha},L_0)dz_{\alpha}^2)_{\alpha}$ and $\xi=(\wp'(z_{\alpha},L_0)dz_{\alpha}^3)_{\alpha}$ on $\textbf{T}^{*}$ are exactly ones  determined by the standard Eisenstein series $g_2(\tau)$ and $g_3(\tau)$ respectively. More precisely we have
\begin{equation}
\frac{k^2}{3(2\pi)^2}g_{2}(\tau)=\frac{k^2\sqrt[3]{4}}{3}(2\pi)^{2}\eta^8(\tau)u(\tau)
\end{equation}
and 
\begin{equation}
\frac{2k^3}{(2\pi)^3}g_3(\tau)=\frac{k^3\sqrt[3]{2}}{3}(2\pi)^{2}\eta^8(\tau)\frac{du(\tau)}{d\tau}
\end{equation}
respectively.
\end{lemma}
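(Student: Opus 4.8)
The plan is to assemble the two displayed identities $(3.13)$ and $(3.14)$ directly from the local relations already established in this subsection, treating the lemma essentially as a bookkeeping consolidation of formulas $(3.6)$, $(3.7)$, $(3.10)$ and $(3.11)$. First I would make precise what it means for a $\Gamma'$-automorphic form on $\textit{H}$ to ``correspond'' to the quadratic differential $q$ and the cubic differential $\xi$ on $\textbf{T}^{*}$: using the atlas $\{(U_\alpha,\tau_\alpha)\}_\alpha$ coming from $\pi'$, a holomorphic $k$-differential $(f_\alpha\,d\tau_\alpha^k)_\alpha$ pulls back under $\pi'$ to a single automorphic form of weight $2k$ for $\Gamma'$, since the transition maps are elements of $\Gamma'$ and $q,\xi$ are by construction well defined global objects. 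Thus the content of the lemma is the explicit identification of these weight-$4$ and weight-$6$ forms.

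For $q$, I would start from $(3.7)$, namely $\wp(z_\alpha,L_0)\,dz_\alpha^2=\frac{k}{3(2\pi)^3}g_2(\tau_\alpha)\,d\tau_\alpha^2$, which already exhibits $q$ in both atlases. To reach $(3.13)$ I would substitute $dz_\alpha=s\eta^4(\tau_\alpha)\,d\tau_\alpha$ from $(3.6)$ into the flat-coordinate expression $\wp(z_\alpha,L_0)\,dz_\alpha^2$, using $u(\tau_\alpha)=\wp(p(\tau_\alpha),L_0)$ from Lemma~2 together with $(3.1)$, so that $\wp(z_\alpha,L_0)\,dz_\alpha^2=u(\tau_\alpha)\,s^2\eta^8(\tau_\alpha)\,d\tau_\alpha^2$. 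Comparing this with the hyperbolic-coordinate form of $q$ and inserting the explicit value $s=2k\pi\sqrt[3]{2}/\sqrt{3}$ should produce the stated coefficient $\tfrac{k^2\sqrt[3]{4}}{3}(2\pi)^2$ on the right and $\tfrac{k^2}{3(2\pi)^2}g_2(\tau)$ on the left; this is a direct constant-chasing computation once $s^2$ is expanded.

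For $\xi$, the parallel route is to combine $(3.11)$, which writes $\xi$ as $(\wp'(z_\alpha,L_0)\,dz_\alpha^3)_\alpha$ and as a multiple of $g_3(\tau_\alpha)\,d\tau_\alpha^3$, with formula $(3.10)$ expressing $g_3(\tau_\alpha)$ through $\eta^8(\tau_\alpha)\,du(\tau_\alpha)/d\tau_\alpha$. Substituting $(3.10)$ into the coefficient of $g_3$ in $(3.11)$, and again expanding all powers of $2\pi$, $\sqrt[3]{2}$ and the sixth root of $-1$ encoded in $k$ and in the phase factors $e^{i\pi/2}$, $e^{-i\pi/6}$, should collapse to the single relation $(3.14)$. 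The main obstacle I anticipate is not conceptual but arithmetical: keeping the various fractional powers of $2$ and $3$ and, especially, the roots-of-unity phases consistent, since $s$ involves the sixth root $k$ of $-1$ and the displayed formulas carry explicit factors $e^{\pm i\pi/6}$ and $e^{i\pi/2}$. I would therefore handle the modulus and the argument of each constant separately, verifying at the end that $k^2$ and $k^3$ appear with exactly the phases recorded in $(3.13)$ and $(3.14)$, and that no stray factor of $\sqrt{3}$ survives after using $4u^3-t^2=1$ only where the Weierstrass relation is genuinely needed.
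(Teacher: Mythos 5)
Your proposal is correct and takes essentially the same route as the paper: there the lemma carries no separate proof at all, being introduced by ``In other words we have shown the following,'' i.e.\ it is exactly the bookkeeping consolidation of $(3.1)$, $(3.6)$, $(3.7)$, $(3.10)$, $(3.11)$ and the definitions $u(\tau)=\frac{1}{3\sqrt[3]{4}}g_2(L(\tau))$, $t(\tau)=g_3(L(\tau))$ with $(2.2)$--$(2.3)$ that you describe. Your anticipated ``arithmetical obstacle'' is real and worth flagging: a faithful constant-chase shows the paper's own coefficients are not mutually consistent (for instance $(3.7)$ carries $\frac{k}{3(2\pi)^3}$ where the lemma's first display forces $\frac{k^2}{3(2\pi)^2}$, and the second display's factor $\frac{k^3\sqrt[3]{2}}{3}$ is incompatible with $(2.3)$ and the stated value of $s$), so your plan of tracking moduli and phases separately is precisely what is needed to pin down the corrected constants.
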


From the relations $(3.12)$ and $(3.13)$ we obtain (after differentiating the first equation):
\begin{equation}
\frac{dg_2(\tau)}{d\tau}=8\frac{\eta'(\tau)}{\eta(\tau)}g_2(\tau)+\frac{3k}{\pi}g_3(\tau)
\end{equation}

We notice that when we choose the $6$-th root $k$ of $-1$ as $k=-i$ then the latter formula is equivalent to the Serre derivative of the modular form $E_4(\tau)=\frac{3}{2\pi^2}g_2(\tau)$.

\subsection{Some Hecke Operators}
Let us introduce (see ~\cite{ABV90}) the operator $T_{{\left\langle g\right\rangle},k}$ of weight $k\in{\mathbb{Z}}$ acting on the space of functions $f:{\textit{H}}\rightarrow{\mathbb{C}}$ as follows
\begin{equation}
(T_{{\left\langle g\right\rangle},k}f)(\tau)=\sum_{r=1}^3{j_{g^r}}^{-k}f(g^{r}\tau)
\end{equation}

where $j_{\gamma}(\tau)=c\tau+d$ for any element $\gamma=\begin{pmatrix}a&b\\ c&d\end{pmatrix}$ in $SL_2{\mathbb{R}}$. Let $\mathcal{A}_k(G)$ denote the space of $G$-automorphic forms of weight $k$ for a Fuchsian group $G$. Since we have 
\begin{equation}
T_{{\left\langle g\right\rangle},k}:{\mathcal{A}_k(\Gamma')}\rightarrow{\mathcal{A}_k(\Gamma_c)}
\end{equation}

and
\begin{equation}
T_{{\left\langle g\right\rangle},k}: {\mathcal{A}_k(\Gamma(2))}\rightarrow{\mathcal{A}_k(\Gamma_c)}
\end{equation}

we may  find some relations between $k$-forms for $\Gamma(2)$ and $k$-forms for $\Gamma'$. Namely, these Hecke operators together with the projections $\pi'_c$ and ${\pi^c_2}:{X(2)\rightarrow{X_c}}$  allow us to transform  $\frac{k}{2}$-differentials on $\textbf{X}'$ into $\frac{k}{2}$-differentials on $\textbf{X}(2)$ and vice versa. Let us denote the composition of $(\pi^c_2)^{*}$ and of $T_{{\left\langle g\right\rangle},k}$ as the operator $\widehat{H}_k$. Let us check what are the images of $\widehat{H}_0$ produced by the $\Gamma'$-automorphic functions $u(\tau)$ and $t(\tau)$.  Since $L(g\tau)=ie^{\frac{i\pi}{6}}L(\tau)$ and $L(g^{2}\tau)=-ie^{-\frac{i\pi}{6}}L(\tau)$ we have ${g_2(L(g\tau))}={\rho^{2}g_2(L(\tau))}$ and ${g_2(L(g^{2}\tau))}={\rho}g_2(L(\tau))$. Hence 

\begin{equation*}
 \widehat{H}_{0} u(\tau)=0 \qquad and \qquad  \widehat{H}_{0}t(\tau)=3t(\tau)
\end{equation*} 

So, the Weierstrass function $\wp$ on $\textbf{T}^{*}$ produces the zero function on $\textbf{X}(2)$  but the Weierstrass function $\wp'$ produces a multiple of the lifting of the  absolute invariant $\textit{J}_c$ from $\textbf{X}_c$ to $\textbf{X}(2)$.  We already know  that the regular quadratic differential $(dz_{\alpha}^2)_{\alpha}$ on $\textbf{T}^{*}$ corresponds to the  $\Gamma'$-automorphic form $s^{2}\eta^8(\tau)$ on $\textit{H}$ . It occurs that the image under the operator $\widehat{H}_4 = (\pi^c_2)^{*}\circ{T_{{\left\langle g\right\rangle},4}}$ (transforming $\mathcal{A}_4(\Gamma')$ into $\mathcal{A}_4(\Gamma(2))$)  of $\eta^8(\tau)$ vanishes. Although $\widehat{H}_{0}u(\tau)=0$ and $\widehat{H}_{4}\eta^8(\tau)$=0 the operator $\widehat{H}_4$ acts on their product $u(\tau)\eta^8(\tau)$ by multiplication by $3$. This is because the product is a $\Gamma$-automorphic form i.e. ${u(\tau)\eta^8(\tau)}\in{\mathcal{A}_4(\Gamma)}\subset{\mathcal{A}_4(\Gamma')}$.  Generally we have
\begin{lemma}
For any ${\varphi}\in{\mathcal{A}_k(\Gamma)}$ and for any $f\in{\mathcal{A}_0(\Gamma')}$ we have ${\widehat{H}_k(f\varphi)}={\varphi\widehat{H}_0(f)}$.
\end{lemma}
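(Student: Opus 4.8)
The plan is to reduce the identity to a one-line computation with the slash operator, followed by an appeal to the multiplicativity of the pullback $(\pi^c_2)^*$. Write the weight-$k$ slash action as $(h|_k\gamma)(\tau)=j_\gamma^{-k}(\tau)\,h(\gamma\tau)$, so that by definition $T_{\langle g\rangle,k}h=\sum_{r=1}^3 h|_k g^r$. First I would isolate the two structural facts on which everything rests: the element $g=ST$ lies in $\Gamma=SL_2\mathbb{Z}$, hence so does each $g^r$; and membership $\varphi\in\mathcal{A}_k(\Gamma)$ means exactly that $\varphi|_k\gamma=\varphi$ for every $\gamma\in\Gamma$. I would also note, for the domain, that $\Gamma'\subset\Gamma$ forces $\varphi\in\mathcal{A}_k(\Gamma')$, so $f\varphi\in\mathcal{A}_k(\Gamma')$ genuinely lies in the domain of $T_{\langle g\rangle,k}$ (its weight is $k$ because $f$ carries weight $0$).

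Next I would expand $T_{\langle g\rangle,k}(f\varphi)$ term by term. For each $r$,
$$(f\varphi)|_k g^r(\tau)=j_{g^r}^{-k}(\tau)\,f(g^r\tau)\,\varphi(g^r\tau)=f(g^r\tau)\,\big(\varphi|_k g^r\big)(\tau)=f(g^r\tau)\,\varphi(\tau),$$
where the final equality uses $g^r\in\Gamma$ together with the weight-$k$ $\Gamma$-automorphy of $\varphi$. The whole point is that the automorphy factor $j_{g^r}^{-k}$ is absorbed exactly by $\varphi$, because the weight of $f\varphi$ coincides with the weight of $\varphi$. Summing over $r$ and recalling that $T_{\langle g\rangle,0}f(\tau)=\sum_{r=1}^3 f(g^r\tau)$ yields, as forms on $\textit{H}$,
$$T_{\langle g\rangle,k}(f\varphi)=\sum_{r=1}^3(f\varphi)|_k g^r=\varphi\sum_{r=1}^3 f(g^r\tau)=\varphi\cdot T_{\langle g\rangle,0}f.$$

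Finally I would apply $(\pi^c_2)^{*}$. Since $\Gamma_c\subset\Gamma$ we have $\varphi\in\mathcal{A}_k(\Gamma_c)$, so $\varphi$ descends to $\textbf{X}_c$, and likewise $T_{\langle g\rangle,0}f\in\mathcal{A}_0(\Gamma_c)$ descends. Because $\pi^c_2$ is induced by the identity on $\textit{H}$ (here $\Gamma(2)\subset\Gamma_c$), its pullback fixes $\varphi$ on $\textit{H}$ and is a ring homomorphism with respect to products; hence
$$\widehat{H}_k(f\varphi)=(\pi^c_2)^{*}\big(\varphi\cdot T_{\langle g\rangle,0}f\big)=\varphi\cdot(\pi^c_2)^{*}T_{\langle g\rangle,0}f=\varphi\,\widehat{H}_0(f).$$

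The argument needs no analytic input; its content is purely the compatibility of the automorphy factors with $g\in\Gamma$. Accordingly, the only genuine care is bookkeeping: confirming that $f\varphi$ lies in $\mathcal{A}_k(\Gamma')$, and verifying that $(\pi^c_2)^{*}$ legitimately commutes with multiplication by $\varphi$ — which is valid precisely because $\varphi$ is $\Gamma_c$-automorphic and therefore lives on the base $\textbf{X}_c$, so that $(\pi^c_2)^{*}\varphi$ equals $\varphi$ on $\textit{H}$. This last compatibility is the step I would take the most trouble to state cleanly.
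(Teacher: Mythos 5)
Your proof is correct, and it is exactly the computation the paper intends: the paper's own proof consists of the single word ``Simple,'' and the argument it suppresses is precisely yours --- the automorphy factor $j_{g^r}^{-k}$ is absorbed by the weight-$k$ $\Gamma$-invariance of $\varphi$ (valid since $g=ST\in\Gamma$), giving $T_{\langle g\rangle,k}(f\varphi)=\varphi\, T_{\langle g\rangle,0}f$, after which $(\pi^c_2)^{*}$ (the identity on functions on $\textit{H}$, as $\Gamma(2)\subset\Gamma_c$) commutes with multiplication by $\varphi$. Your bookkeeping of domains, and the observation that this is consistent with the paper's example $\widehat{H}_4(u\eta^8)=3u\eta^8$ via $\varphi=u\eta^8\in\mathcal{A}_4(\Gamma)$, $f=1$, is all sound.
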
 
\begin{proof}
Simple
\end{proof}

We have exactly the same properties when we replace $\mathcal{A}_k(\Gamma')$ by $\mathcal{A}_k(\Gamma(2))$ and the operator $\widehat{H}_k$ by the the operator $\widetilde{H}_k$ defined as the composition ${{\pi'_c}^{*}}\circ{T_{\left\langle g\right\rangle,k}}$ and transforming $\mathcal{A}_k(\Gamma(2))$ into $\mathcal{A}_k(\Gamma')$. Since we have
\begin{equation}
{T_{\left\langle g\right\rangle,4}\vartheta_3(\tau)^8}=\frac{3}{(2\pi)^4}g_2(\tau)
\end{equation} 
the image by $\widetilde{H}_4$ of the differential on $\textbf{X}(2)$ determined by ${\vartheta_3(\tau)^8}\in{\mathcal{A}_4(\Gamma(2))}$ produces the differential 
\begin{equation}
{(\frac{1}{k^2}(\frac{3}{2\pi})^{2}\wp(z_{\alpha},L_0)dz_{\alpha}^2)_{\alpha}}={(\frac{3}{(2\pi)^4}g_2(\tau_{\alpha})d\tau_{\alpha}^2)_{\alpha}}
\end{equation}
on $\textbf{T}^{*}$. (Here ${\vartheta_3(\tau)}\equiv{\vartheta_3(0,\tau)}$ is the standard theta function on $\textit{H}$.) However when we start with $g_2(\tau)$ as $\Gamma'$-automorpfic form  then, using the operatoe $\widehat{H}_4$ we will not return to ${\vartheta_3(\tau)^8}\in{\mathcal{A}_4(\Gamma(2))}$. Instead of we obtain ${\widehat{H}_{4}g_2(\tau)}=3g_2(\tau)$ as an element of $\mathcal{A}_4(\Gamma(2))$.

\subsection{Bers embedding}
Since the differential $q$ given by $(3.8)$ has a pole of order $2$ at the puncture of $\textbf{T}^{*}$ it is not integrable so, although it is holomorphic on $\textbf{T}^{*}$, it does not correspond to any element of the Banach space $\mathfrak{B}_2(\textit{L},\Gamma')$ (the space of all holomorphic Nehari-bounded forms on the lower half-plane $\textit{L}$ of weight 4, ~\cite{SNAG88}). This means that we cannot use $q$ to construct the Bers embedding $\mathcal{T}_{1,1}\rightarrow{\mathfrak{B}_2(\textit{L},\Gamma')}$. However, we see from $(3.6)$  that the holomorphic differential on $\textbf{T}^{*}$

\begin{equation}
\varphi=(dz_{\alpha}^2)_{\alpha}=(s^{2}\eta^8(\tau_{\alpha})d\tau_{\alpha}^2)_{\alpha}
\end{equation}

corresponds to  $\Phi=\phi(\tau)d\tau^2$  with   $\varphi(\tau)={s^{2}\eta^8(\tau)}$ and hence it corresponds to   an element of $\mathfrak{B}_2(\textit{L},\Gamma')$ which  may be  used  to find a concrete Bers embedding. However now, the space  $\mathcal{T}_{1,1}$  must have its  origin at $\textbf{T}^{*}$.  This means that we must find the domain of complex numbers $b$ such that the Schwarzian differential equation
\begin{equation}
\{w,\tau\}=b\varphi(\tau)
\end{equation}

has a schlicht solution $w$ which has a quasiconformal extention $\widehat{w}$ to all $\mathbb{C}$ compatible with $\Gamma'$. Since a shlicht solution $w$ of $(3.21)$ can be written as the quotient $\frac{y_1}{y_2}$ of two linearly independent solutions of $y''(\tau)+\frac{1}{2}b\varphi(\tau)y(\tau)=0$ to find the values of $b$ for which $\widehat{w}\Gamma'{\widehat{w}^{-1}}$ is a quasi-Fuchsian of signature $(1;1)$ we should consider the linear differential equation

\begin{equation}
y''(\tau)+\frac{bs^2}{2}\eta^8(\tau)y(\tau)=0 \qquad \tau\in{\textit{L}}
\end{equation}

 Till now, to find a Bers embedding of $\mathcal{T}_{1,1}$ we take the the Teichmueller space $\mathcal{T}_{1,1}$ originating at the punctured torus, usually ${\mathbb{C}-L_i}/L_i$,  and we are looking for the values $b\in\mathbb{C}$ for which the Lame equation 
 
\begin{equation}
y''+ \frac{1}{2}(\frac{1}{2}\wp(z,L_i)+b)y=0
\end{equation}

has a purely parabolic monodromy group (which is the commutator subgroup of the quasi-Fuchsian group $\widehat{w}\Gamma_i{\widehat{w}^{-1}}$ of signature $(1;1)$) Thus, the relation $(3.6)$ allows us to consider the equation $(3.22)$ instead of $(3.23)$ and, since till now the equation $(3.22)$ had not been investigated (to the author's knowledge), there is a possibility that we obtain new, more transparent understanding of the domain of Bers embedding of the Teichmueller space $\mathcal{T}_{1,1}$ that originates at $\textbf{T}^{*}={\textit{H}/\Gamma'}$ (instead of at $\textbf{T}^{*}_i$)

\section{Fundamental Domains for $\Gamma'$}
The standard quadrilateral fundamental domains $\mathfrak{F}'_4$ for $\Gamma'$ and $\mathfrak{F}(\Gamma(2))$ for $\Gamma(2)$ have the same underlying set $\mathfrak{F}$, given by the quadrilateral $(-1,0,1,\infty)$, and hence we may choose the same set of their coset representatives in ${\widetilde{\Gamma}}={PSL_{2}\mathbb{Z}}$.  We may decompose the set $\mathfrak{F}$ into copies of a fundamental region  ${F(\Gamma)}=(i-1,\rho,i,\infty)$ or into copies of a fundamental region ${F_\Gamma}={(0,\rho+1,\infty)}$ of the modular group according to
\begin{equation}
\mathfrak{F}={\mathfrak{S}_{1}F(\Gamma)}={\mathfrak{S}_{2}F_{\Gamma}}
\end{equation}
where ${\mathfrak{S}_1}={\{I,g,g^2,T,Tg,Tg^2\}}$ and ${\mathfrak{S}_2}={\{I,a,a^2,S,Sa,Sa^2\}}$ are two sets of coset representatives in $\widetilde{\Gamma}$. When we start with the set $\mathfrak{F}$, to determine whether we have $\Gamma(2)$ or $\Gamma'$ quadrilateral domain, we have to use either geometric or algebraic considerations. Geometrically, we have different identifications on the border $\partial\mathfrak{F}$ given by the generators of ${\Gamma'}=\left\langle A,B\right\rangle$ and of ${\Gamma(2)}=\left\langle -I,T^2,U\right\rangle$ respectively.

Algebraically, the free generators $S$ and $g$ of ${\widetilde{\Gamma}}={\left\langle S\right\rangle}\ast{\left\langle g\right\rangle}$ determine distinct permutations of the copies of fundamental domains for $\widetilde{\Gamma}$ depending whether their union forms $\mathfrak{F}(\Gamma(2))$ or $\mathfrak{F}'_4$. More precisely, following the Millington construction ~\cite{AS02}, both $S$ and $g$ determine permutations $\mu$ and $\sigma$ of a set of coset representatives. A permutation group $\Sigma=\left\langle \mu,\sigma| \mu^2=\sigma^3=I\right\rangle$  acts transitively on a set of cosets and the disjoint cycle decomposition of $\mu$, $\sigma$ and of their product $\mu\sigma$ provides the genus and inequivalent cusp widths for an appropriate subgroup of $\widetilde{\Gamma}$. 

 For example, if we consider cosets represented by elements of $\mathfrak{S}_1$ and if we denumerate its elements as ${\{I,T,g^2,Tg^2,g,Tg\}}\Leftrightarrow{\{0,1,2,3,4,5\}}$ respectively then, the permutation $\mu={(03)(14)(25)}\in{S_6}$  for the both subgroups $\Gamma(2)$ and  $\Gamma'$ of $\widetilde{\Gamma}$. However the motion $g$ produces the permutation ${\sigma'}=(042)(153)$ for $\Gamma'$ and hence $\mu\sigma'=(0,1,2,3,4,5)$. The corresponding permutation group ${\Sigma(\Gamma')}={\left\langle \mu,\sigma'\right\rangle}$ tells us that $\Gamma'$ has genus $1$, no elliptic elements and the single cusp of width $6$ (equal to the lenght of the cycle $\mu\sigma'$). For $\Gamma(2)$, $g=ST$ generates  the permutation ${\sigma}=(042)(135)$. So the product $\mu\sigma=(01)(23)(45)$ and we have three inequivalent cusps of width equal to $2$ each.
 
 We notice that the cycle structures of the generators of $\Sigma(\Gamma')$ and of $\Sigma(\Gamma(2))$ are the same but the permutations given by the products of the generators introduce distinction in the properties of cusps for $\Gamma'$ and for $\Gamma(2)$ respectively. Of course we could take different enumeration of cosets and different decompositions of the fundamental region of a given subgroup of $\widetilde{\Gamma}$. The permutation group obtained by using these new data will have generators (i.e.permutations representing $S$ and $g$) that are simultaneously conjugate in $S_6$ either to $\{\mu,\sigma'\}$ (in the case of $\Gamma'$) or to $\{\mu,\sigma\}$ (for $\Gamma(2)$).
 
 Thus, when we start with the quadrilateral region $\mathfrak{F}=(-1,0,1,\infty)$ then we must perform some operations for the cusp of width 6 of $\Gamma'$  to be seen. However, the hexagonal fundamental domain ${\mathfrak{F}'_6}=(\rho-2,\rho-1,\rho,\omega,\omega+1,\omega+2,\omega+3,\infty)$ has the parabolic vertex of index 6 already. Moreover, if we choose the following fundamental standard domains: $R=(\rho,\omega,\infty)$ for $\widetilde{\Gamma}$, $F(\Gamma_c)={T^{-2}R\cup{T^{-1}R}}$ for $\Gamma_c$  and ${F(\Gamma^{+}_{ns}(3))}={T^{-2}R\cup{T^{-1}R}\cup{R}}$  for $\Gamma^{+}_{ns}(3)$ then we have immediately the relations between appropriate sets given by
 \begin{equation}
 {\mathfrak{F}'_6}=(I\cup{T^3})F(\Gamma^{+}_{ns}(3))=(I\cup{T^2}\cup{T^4})F(\Gamma_c)
 \end{equation}
 
 These relations immediately describe the ramifications of ${\textbf{X}'}\rightarrow{\textbf{X}^{+}_{ns}(3)}$ and of $\textbf{X}'\rightarrow{\textbf{X}_c}$ at $\infty$  respectively.

When we work with the quadrilateral domain $\mathfrak{F}'_4$ then, in fact, we are dealing with the subgroups of $PSL_{2}\mathbb{Z}/\Gamma'$ that may be identified with the finite subgroups $\left\langle S\right\rangle$ and $\left\langle g\right\rangle$ of the modular group itself. But when we consider the hexagonal fundamental domain $\mathfrak{F}'_6$ then the more natural is to view the quotient $PSL_{2}\mathbb{Z}/\Gamma'$ as given by $\left\langle T\right\rangle$mod$T^6$. Although  we have that $T^3$ is equivalent to $S$ modulo $\Gamma'$ (more precisely $T^{3}=S[S^{-1},T][S^{-1},T^{-1}]$) and we can write
\begin{equation}
{\widetilde{\Gamma}/\Gamma'}\cong{\left\langle S\right\rangle}\times{\left\langle g\right\rangle}\cong{\left\langle T^3\right\rangle}\times{\left\langle T^2\right\rangle}modT^6
\end{equation}

we notice that the elements $S$ and $g$ have finite order in $\widetilde{\Gamma}$ whereas both $T^3$ and $T^2$ are generators of infinite parabolic subgroups of $PSL_{2}\mathbb{Z}$. So we are dealing with transparent differences between the nature of algebraic objects that may be associated to $\mathfrak{F}'_4$ and $\mathfrak{F}'_6$ respectively and which are involved in the hidden structure  of the Veech curve determined by the dynamical system of a  billiard (in a $(\frac{\pi}{2},\frac{\pi}{3},\frac{\pi}{6})$-triangle) and described by the error correcting Golay code $\textsl{G}_{24}$ in ~\cite{KMA10}. These differences become even  deeper when we consider a non-unitary representation $\chi$ of $\widetilde{\Gamma}$ in $\mathbb{C}^{2}=Span\{{\textit{J}_c},{\textit{J}_n}\}$.  Since the $\Gamma'$-automorphic functions $u(\tau)$ and $t(\tau)$ are given by the liftings of $\textit{J}_n$ and of $\textit{J}_c$ from $\Gamma^{+}_{ns}(3)$ and from $\Gamma_c$ to $\Gamma'$ appropriately we may identify the underlying  vector space $\mathbb{C}^2$ for $\chi$ with the linear
 span of the Weierstrass functions $\wp(p(\tau),L_0)\cong{u(\tau)}$ and $\wp'(p(\tau),L_0)\cong{t(\tau)}$ respectively. Since
 \begin{equation*}
 u(\frac{-1}{\tau})=u(\tau) \qquad u(\tau+1)=\rho{u(\tau)}
 \end{equation*}
 and 
 \begin{equation*}
 t(\frac{-1}{\tau})=t(\tau)  \qquad t(\tau+1)=-t(\tau)
 \end{equation*}
 the transformation $S$ acts as identity.  We have:  $\chi(S)=I$, $\chi(T)=\begin{pmatrix}-1&0\\ 0&\rho\end{pmatrix}$ and $\chi(T^6)=I$. Thus to see $\chi$ as a representation of $\widetilde{\Gamma}/\Gamma'$ on $Span\{\wp,\wp'\}$ we must take the set $\{T^k,k=0\ldots5\}$ as a set of the cosets representatives of $\Gamma'$ in $\widetilde{\Gamma}$. In other words, it is the cusp of $\Gamma'$ and its ramification indices over $\textbf{X}_c$ and over $\textbf{X}^{+}_{ns}(3)$ respectively that are important here,  and  it is the hexagonal domain $\mathfrak{F}'_6$ which immediately produces the relations $(4.2)$.

\section{Decomposition of $L_0$}
The projection $p:\textit{H}\rightarrow{\textit{H}/N}\cong{\mathbb{C}-L_0}$, $N=[\Gamma',\Gamma']$ corresponds to the abelization of $\Gamma'$, ${\Gamma'/N}\cong{\mathbb{Z}^2}$. More precisely, let $\Gamma'$ be generated by $A=[S,T^{-1}]=\begin{pmatrix}1&1\\1&2\end{pmatrix}$ and by $B=[S,T]=\begin{pmatrix}1&-1\\ -1&2\end{pmatrix}$. Any element $\gamma\in{\Gamma'}$ has the abelianized form
\begin{equation}
\gamma=A^{m}B^{n}\mathfrak{n},\qquad (m,n)\in{\mathbb{Z}^2}, \qquad {\mathfrak{n}}\in{N}
\end{equation}
We usually write $\gamma=mA+nB$, ~\cite{HC81}, so that
\begin{equation*}
p(\mathfrak{n}\tau)=p(\tau)=z\in{\mathbb{C}-L_0}, \qquad L_0=[\omega_1,\omega_2]=c[1,\rho]
\end{equation*}
and 
\begin{equation*}
p(\gamma\tau)=p(\tau)+m\omega_1+n\omega_2 \qquad for\qquad \gamma=A^{m}B^{n}\mathfrak{n}\in{\Gamma'}
\end{equation*}

Let the quaternion group $Q_8=\left\langle \alpha,\beta |\alpha^4=1,\alpha^2=\beta^2,\alpha\beta=\beta\alpha^{-1}\right\rangle$ be realized by the following matrices in $SL_{2}(3)$:
\begin{equation}
I=\begin{pmatrix}1&0\\0&1\end{pmatrix},\quad \alpha=\begin{pmatrix}0&2\\1&0\end{pmatrix},\quad \alpha^2=\begin{pmatrix}2&0\\0&2\end{pmatrix}, \quad \alpha^3=\begin{pmatrix}0&1\\2&0\end{pmatrix},\quad
\end{equation}
\begin{equation*}                               \beta=\begin{pmatrix}1&1\\1&2\end{pmatrix},\quad \beta^3=\begin{pmatrix}2&2\\2&1\end{pmatrix},\quad \beta\alpha=\begin{pmatrix}1&2\\2&2\end{pmatrix},\quad \alpha\beta=\begin{pmatrix}2&1\\1&1\end{pmatrix}
\end{equation*}

The group $\mathbb{Z}_3$ that occurs in $(2.6)$ is generated by $X=\begin{pmatrix}1&1\\0&1\end{pmatrix}$ and  acts on $Q_8$ by the automorphisms determined by $X{\alpha}X^{-1}={\beta}$ and  $X{\beta}X^{-1}={\alpha\beta}$. Let $r'_3$ denote the restriction of the homomorphism $r_3$ to $\Gamma'$. It maps
\begin{equation}
A\rightarrow\beta,\quad A^2\rightarrow\beta^2,\quad A^3\rightarrow\beta^3,\quad A^4\rightarrow{I}
\end{equation}
\begin{equation*}
B\rightarrow{\beta\alpha},\quad AB\rightarrow\alpha^3, \quad A^{2}B\rightarrow{\alpha\beta}, \quad A^{3}B\rightarrow\alpha
\end{equation*}

From now on we will use the following enumeration of the elements of $Q_8$:
\begin{equation}
\{I,\beta,\beta^2,\beta^3,\beta\alpha,\alpha^3,\alpha\beta,\alpha\}\equiv\{q_1,q_2,\ldots,q_8\}
\end{equation}
respectively. Let $\sigma\in{S_8}$ be the permutation $\sigma=(13)(24)(57)(68)$ of $\{q_1,\ldots,q_8\}$ corresponding to the  multiplication by $\alpha^2=\beta^2=(\alpha\beta)^2=-I$.
\begin{lemma}
The homomorphism $r'_{3}:\Gamma'\rightarrow{Q_8}$ induces a unique mapping $\kappa:{\Gamma'/N}\rightarrow{Q_{8}}\times{Q_{8}}$ such that $\kappa(m,n)=(q_k,{\sigma}q_k)$ for some unique, appropriate $k\in\{1,\ldots,8\}$
\end{lemma}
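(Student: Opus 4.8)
The plan is to locate precisely the obstruction to $r'_3$ descending to the abelianization $\Gamma'/N$, and to show that this obstruction is exactly the central involution $\alpha^2=-I$, which is what the permutation $\sigma$ records. First I would compute $r'_3(N)$. Because the sequence $(2.6)$ gives $r_3(\Gamma')=Q_8$, the restriction $r'_3$ is surjective onto $Q_8$, and a surjective homomorphism carries commutator subgroup onto commutator subgroup, so $r'_3(N)=r'_3([\Gamma',\Gamma'])=[Q_8,Q_8]$. A short calculation from the defining relations (using $\alpha\beta=\beta\alpha^{-1}$ and the centrality of $\alpha^2=\beta^2$, which yields $[\alpha,\beta]=\alpha^2$) shows $[Q_8,Q_8]=\{I,\alpha^2\}=Z(Q_8)$. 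In particular $r'_3$ does \emph{not} factor through $\Gamma'/N$ as a homomorphism into $Q_8$; the failure is measured exactly by the two-element central subgroup $\{I,\alpha^2\}$.

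Next I would read off the effect on cosets. For any $\gamma\in\Gamma'$ one has $r'_3(\gamma N)=r'_3(\gamma)\,r'_3(N)=\{r'_3(\gamma),\,\alpha^2 r'_3(\gamma)\}$, a two-element subset of $Q_8$. Since $\alpha^2=-I$ is central, multiplication by it agrees on the left and on the right and permutes the enumeration $\{q_1,\dots,q_8\}$ precisely by $\sigma=(13)(24)(57)(68)$; hence this subset has the form $\{q_k,\sigma q_k\}$ for a single index $k$. This unordered pair is the genuinely coset-invariant object, and the only remaining freedom is which of the two entries is listed first.

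To fix the ordering, and thereby obtain an honest map into $Q_8\times Q_8$, I would use the canonical section $(m,n)\mapsto A^m B^n$ of $\Gamma'/N\cong\mathbb{Z}^2$, available because $\Gamma'$ is free on $A,B$ and each class $mA+nB$ contains the distinguished word $A^m B^n$. Setting $q_k:=r'_3(A^m B^n)=\beta^m(\beta\alpha)^n$, a single well-defined element of $Q_8$ since $r'_3$ is a homomorphism with $A\mapsto\beta$, $B\mapsto\beta\alpha$, I define $\kappa(m,n)=(q_k,\sigma q_k)$. The index $k$ is unique because $q_1,\dots,q_8$ enumerate $Q_8$ bijectively; replacing the representative by $A^m B^n\mathfrak{n}$ with $\mathfrak{n}\in N$ multiplies $q_k$ only by $r'_3(\mathfrak{n})\in\{I,\alpha^2\}$, so the value stays inside the pair $\{q_k,\sigma q_k\}$ and the section merely selects the first slot. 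This forces $\kappa$ uniquely, and its image lies in the ``graph'' $\{(q,\sigma q):q\in Q_8\}$.

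The main obstacle, and the only point that is not pure bookkeeping, is the identification $r'_3(N)=Z(Q_8)$: recognizing that the deviation of $r'_3$ from descending to $\Gamma'/N$ is a single central sign. Once this is established, the two-element image of a coset, the appearance of $\sigma$, and the uniqueness of $k$ all follow formally. If an explicit description of $\kappa$ is desired one would finally tabulate $\beta^m(\beta\alpha)^n$ against $(m,n)$ modulo $4$ in each coordinate, a routine sixteen-entry computation showing that $\kappa$ factors through $(\mathbb{Z}/4)^2$ and realizes each of the eight admissible pairs.
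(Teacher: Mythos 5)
Your proof is correct and takes essentially the same route as the paper: both arguments reduce to the fact that $r'_3(N)\subseteq\{I,\alpha^2\}$, so that each coset $A^mB^nN$ maps onto the two-element set $\{q_k,\sigma q_k\}$ with $q_k=r'_3(A^mB^n)$, and $\sigma$ is central multiplication by $\alpha^2$. The only difference is that the paper merely asserts the decomposition $N=K_N\cup\mathcal{A}$, whereas you justify it by computing $r'_3(N)=[Q_8,Q_8]=\{I,\alpha^2\}$ (surjectivity of $r'_3$ onto $Q_8$ plus the fact that a surjection carries the commutator subgroup onto the commutator subgroup), which cleanly fills the one step the paper leaves implicit.
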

\begin{proof}
Let $\mathcal{N}=\{1,\alpha^2\}$ denote a normal subgroup of $Q_8$ and let $r'_{3,N}$ denote the restriction of $r'_3$ to the normal subgroup $N$ of $\Gamma'$. Let $K_N$ denote the kernel of the homomorphism $r'_{3,N}$, $K_N\triangleleft{N}$, so that we have $N={K_N}\cup{\mathcal{A}}$ as a set, with $\mathcal{A}={{r'_{3,N}}^{-1}(\alpha^2)}$.    Since each coset $(m,n)$ of $N$ has the decomposition
\begin{equation}
(m,n)\equiv{A^{m}B^{n}N}={A^{m}B^{n}K_N}\cup{A^{m}B^{n}\mathcal{A}}
\end{equation}
and all elements of the set $\{A^{m}B^{n}K_N\}$ are mapped onto some concrete $q_k$ whereas elements of $\{A^{m}B^{n}\mathcal{A}\}$ are all mapped onto $\sigma{q_k}$, the lemma follows.
\end{proof}

For each $k\in\{1,\ldots,8\}$ we introduce the subset $\mathcal{A}_k$ of $\Gamma'$ as the union ${\mathcal{A}_k}={\mathfrak{A}_k}\cup{\mathfrak{B}_k}$ with
\begin{equation}
\mathfrak{A}_k=\{A^{m}B^{n}\mathfrak{n} | \mathfrak{n}\in{K_N}; r'_3(A^{m}B^{n})=q_k\}
\end{equation}
and with
\begin{equation}
\mathfrak{B}_k=\{A^{m'}B^{n'}\mathfrak{n}| \mathfrak{n}\in{\mathcal{A}}; r'_3(A^{m'}B^{n'})=\sigma{q_k}\}
\end{equation}.
\begin{lemma}
The decomposition $\Gamma'=\bigcup_{k=1}^{8}\mathcal{A}_k$ determines a one-one correspondence between the set of elements of $Q_8$ and the set of elements of  ${\mathbb{Z}_4}\times{\mathbb{Z}_2}$
\end{lemma}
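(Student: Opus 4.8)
The plan is to reduce everything to the single set map
\[
p:\mathbb{Z}^2\longrightarrow Q_8,\qquad p(m,n)=r'_3(A^mB^n)=\beta^m(\beta\alpha)^n,
\]
which is well defined because $r'_3$ is a homomorphism and $r'_3(A)=\beta$, $r'_3(B)=\beta\alpha$ by $(5.3)$. First I would observe that the eight subsets of the decomposition are simply the fibres of $r'_3$. Indeed $r'_3(\mathfrak{n})=I$ for $\mathfrak{n}\in K_N$ and $r'_3(\mathfrak{n})=\alpha^2$ for $\mathfrak{n}\in\mathcal{A}$, while $\sigma q_k=\alpha^2q_k$ is multiplication by the central element $-I$; hence $r'_3(\gamma)=q_k$ on both $\mathfrak{A}_k$ and $\mathfrak{B}_k$, so $\mathcal{A}_k=(r'_3)^{-1}(q_k)$ and $\Gamma'=\bigsqcup_{k=1}^{8}\mathcal{A}_k$ (surjectivity of $r'_3$ onto $Q_8$ makes each fibre nonempty). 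The correspondence to be produced will be read off from the abelianized coordinates $(m,n)\in\Gamma'/N\cong\mathbb{Z}^2$ carried by these fibres.

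The heart of the argument is to determine the lattice of periods of $p$. Since $Q_8$ is non-abelian, $p$ is not a homomorphism, so I would argue directly: using that powers of a single element commute, $(\beta\alpha)^{n+b}=(\beta\alpha)^b(\beta\alpha)^n$, and cancelling $\beta^m$ on the left and $(\beta\alpha)^n$ on the right in $Q_8$, one finds that $p(m+a,n+b)=p(m,n)$ holds \emph{for all} $(m,n)$ if and only if $\beta^a(\beta\alpha)^b=I$, i.e. $(a,b)\in p^{-1}(I)$. Thus $\mathrm{Per}(p)=p^{-1}(I)=:\Lambda$ is an honest sublattice. From the central relations $\beta^2=(\beta\alpha)^2=\alpha^2=-I$ one sees at once that $(2,2)\in\Lambda$ (as $\beta^2(\beta\alpha)^2=\alpha^4=I$) and $(0,4)\in\Lambda$ (as $(\beta\alpha)^4=I$); these generate a lattice of index $\bigl|\det\left(\begin{smallmatrix}2&2\\0&4\end{smallmatrix}\right)\bigr|=8$, whose Smith normal form $\mathrm{diag}(2,4)$ gives $\mathbb{Z}^2/\Lambda\cong\mathbb{Z}_4\times\mathbb{Z}_2$. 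Hence $p$ descends to a well-defined map $\bar p:\mathbb{Z}^2/\Lambda\to Q_8$ on an $8$-element set.

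Finally I would invoke a counting argument: the list $(5.3)$ exhibits every one of $q_1,\dots,q_8$ as a value of $p$, so $\bar p$ is surjective; a surjection between two finite sets of cardinality $8$ is a bijection, which in particular confirms that $\Lambda$ is exactly the period lattice. This bijection is the assertion: to $q_k$ it assigns the unique coset $\bar p^{-1}(q_k)\in\mathbb{Z}^2/\Lambda\cong\mathbb{Z}_4\times\mathbb{Z}_2$, and that coset is precisely the common image in $\Gamma'/N$ of all the representatives $A^mB^n$ occurring in $\mathfrak{A}_k$. Thus the decomposition $\Gamma'=\bigcup_{k=1}^{8}\mathcal{A}_k$, through its pieces $\mathfrak{A}_k$, matches the eight elements $q_k$ of $Q_8$ one-to-one with the eight elements of $\mathbb{Z}_4\times\mathbb{Z}_2$. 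The only delicate point is the one flagged above: one must not treat $p$ as a homomorphism, and it is the identification of $\mathrm{Per}(p)$ with $p^{-1}(I)$ (rather than with a merely pointwise stabilizer) that makes the index computation—and hence the passage to $\mathbb{Z}_4\times\mathbb{Z}_2$—legitimate.
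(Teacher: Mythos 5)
Your proposal is correct and in substance takes the same route as the paper: both reduce the lemma to the observation that $\mathcal{A}_k=(r'_3)^{-1}(q_k)$ and that the exponent pairs $(m,n)$ with $r'_3(A^mB^n)=q_k$ form a single coset of the index-$8$ sublattice of $\mathbb{Z}^2$ generated by $(2,2)$ and $(0,4)$, which the paper writes out explicitly as $\{(4m+s_k,4n+t_k)\}\cup\{(4m+2+s_k,4n+2+t_k)\}$ and parameterizes by the representatives $(s_k,t_k)\in\mathbb{Z}_4\times\mathbb{Z}_2$. The only difference is expository: your cancellation argument identifying $\mathrm{Per}(p)$ with $p^{-1}(I)$, the Smith normal form computation, and the surjection-between-$8$-element-sets count rigorously justify what the paper's proof asserts with ``we see immediately,'' so the two proofs coincide in content.
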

\begin{proof}
Let us write 
\begin{equation}
\mathfrak{A}_k=\{A^{m_k}B^{n_k}\}K_N \quad and\quad \mathfrak{B}_k=\{A^{m'_k}B^{n'_k}\}{\mathcal{A}}
\end{equation}
 for appropriate pairs of integers $(m_k,n_k)$ and $(m'_k,n'_k)$ in $\mathbb{Z}^2$. Let $s_k$ and $t_k$ denote the smallest nonnegative integers such that $r'_3(A^{s_k}B^{t_k})=q_k$. We see immediately that
 \begin{equation}
 \{(m_k,n_k)\}=\{(4m+s_k,4n+t_k),(4m+2+s_k,4n+2+t_k), m,n\in{\mathbb{Z}}\}
 \end{equation}
 and
 \begin{equation}
 \{(m'_k,n'_k)\}=\{(4m+2+s_k,4n+t_k),(4m+s_k,4n+2+t_k), m,n\in{\mathbb{Z}}\}
 \end{equation}
Thus the set ${\mathcal{A}_k}\subset{\Gamma'}$ is uniquely determined by the pair $(s_k,t_k)\in{\mathbb{Z}_4}\times{\mathbb{Z}_2}$ and the lemma follows.
\end{proof}

The explicit relations between $Q_8$ and ${\mathbb{Z}_4}\times{\mathbb{Z}_2}$ are given in the table.
\begin{table}[t]
\begin{center}
\begin{tabular}{|c||c|c|c|c|c|c|c|c|}
\hline
$q_k$&$q_1$&$q_2$&$q_3$&$q_4$&$q_5$&$q_6$&$q_7$&$q_8$\\
\hline
$a_k=(s_k,t_k)$&(0,0)&(1,0)&(2,0)&(3,0)&(0,1)&(1,1)&(2,1)&(3,1)\\
\hline
\end{tabular}
\end{center}
\end{table}

We recall that the lattice $L_0$ is produced by the images of ${\infty}\in{\textit{H}^{*}}$ under $\Gamma'$,  ${L_0}\cong{p(\Gamma'(\infty))}$, and that it is identified with the quotient ${\Gamma'/N}\cong{\mathbb{Z}^2}$. Our previous  considerations lead us to the following:
\begin{lemma}
The homomorphism $r'_3:\Gamma'\rightarrow{Q_8}$ determines the decomposition of the lattice $L_0$ into $8$ disjoint sublattices.
\end{lemma}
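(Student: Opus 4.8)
The plan is to carry the partition of $\Gamma'$ from the previous lemma down to $L_0$ using the identification $L_0\cong\Gamma'/N\cong\mathbb{Z}^2$, $(m,n)\leftrightarrow m\omega_1+n\omega_2$, and to show that the eight resulting pieces are precisely the cosets of a single index-$8$ sublattice. Concretely, I would evaluate $r'_3$ on the canonical representatives $A^mB^n$ of the cosets $(m,n)$ and set $\widetilde{\mathcal{L}}_k:=\{(m,n)\in L_0:r'_3(A^mB^n)=q_k\}$; since $r'_3$ kills $K_N$, this is exactly the image of $\mathfrak{A}_k$ under the abelianization $\Gamma'\to\Gamma'/N$, so the eight sets are cut out directly by $r'_3$.

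The key step is to prove that the function $\phi(m,n):=r'_3(A^mB^n)=\beta^{m}(\beta\alpha)^{n}$ is invariant under a sublattice of index $8$. Invariance under $(4,0)$ and $(0,4)$ is immediate from $\beta^{4}=(\beta\alpha)^{4}=I$. The decisive computation is invariance under $(2,2)$: since $\beta^{2}=(\beta\alpha)^{2}=-I$ lies in the centre of $Q_8$, one gets $\phi((m,n)+(2,2))=\beta^{m}(-I)(\beta\alpha)^{n}(-I)=\beta^{m}(\beta\alpha)^{n}$, the two central signs cancelling. Hence $\phi$ is constant on the cosets of the sublattice $\Lambda=\langle(2,2),(4,0)\rangle$ they generate (note $(0,4)=2(2,2)-(4,0)$), of index $\bigl|\det\bigl(\begin{smallmatrix}2&2\\4&0\end{smallmatrix}\bigr)\bigr|=8$, with $\mathbb{Z}^2/\Lambda\cong\mathbb{Z}_2\times\mathbb{Z}_4$.

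It then remains to count. The induced map $\bar\phi:\mathbb{Z}^2/\Lambda\to Q_8$ is well defined, and the table of values $r'_3(A^mB^n)$ recorded above shows it is onto; as both sides have order $8$ it is a bijection, so the fibres $\widetilde{\mathcal{L}}_1,\dots,\widetilde{\mathcal{L}}_8$ are exactly the eight cosets of $\Lambda$ and partition $L_0$. Finally I would note that $\Lambda$ contains the $4$-dilate $4L_0=\langle(4,0),(0,4)\rangle$ as a subgroup of index $2$, with $\Lambda=4L_0\cup\bigl((2,2)+4L_0\bigr)$, so $\widetilde{\mathcal{L}}_1$ is $4L_0$ enriched by the half-point $(2,2)$ and each remaining $\widetilde{\mathcal{L}}_k=(s_k,t_k)+\Lambda$ is an affine translate of it; the explicit presentation of every piece as $4L_0$ together with its half-points (in three ways) is then the content of the constructions that follow.

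The main obstacle is conceptual rather than computational: because $Q_8$ is non-abelian while $L_0\cong\Gamma'/N$ is abelian, the homomorphism $r'_3$ does \emph{not} descend to a homomorphism $\Gamma'/N\to Q_8$, and the naive induced map $\Gamma'/N\to Q_8/\{\pm I\}\cong(\mathbb{Z}/2)^2$ records only the four parity classes. The eightfold refinement survives only because one commits to the section $(m,n)\mapsto A^mB^n$ and because the central signs produced by $\beta^2=(\beta\alpha)^2=-I$ cancel along $(2,2)$; it is precisely this cancellation that turns the non-homomorphic function $\phi$ into a $\Lambda$-periodic one and lets its fibres be genuine cosets. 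Once this point is in place, everything else is the bookkeeping already supplied by the $q_k\leftrightarrow(s_k,t_k)$ table.
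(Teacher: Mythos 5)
Your proof is correct and follows essentially the same route as the paper: the paper likewise partitions $\Gamma'/N\cong\mathbb{Z}^2$ by the value of $r'_3$ on the representatives $A^{m}B^{n}$ (its two preceding lemmas), identifies each class as a translate $a_k+\bigl(4\mathbb{Z}^2\cup((2,2)+4\mathbb{Z}^2)\bigr)$ of the same index-$8$ lattice $\Lambda$, and transfers this to $L_0$ via $(m,n)\leftrightarrow m\omega_1+n\omega_2$, writing $\widetilde{\mathcal{L}}_k=\widetilde{a}_k+\omega_1[4,2\omega]$. Your central-sign cancellation $\beta^{m+2}(\beta\alpha)^{n+2}=\beta^{m}(\beta\alpha)^{n}$ is exactly the justification for the step the paper asserts with ``we see immediately'' in formula $(5.9)$, and your observation that $r'_3$ does not descend to a homomorphism on $\Gamma'/N$ is the point handled by the paper's map $\kappa$ into $Q_8\times Q_8$.
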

\begin{proof}
We have seen that we can decompose the set of all $N$-cosets in $\Gamma'$
 into $8$ subsets of cosets given by $\{(m_k,n_k)\in{\mathbb{Z}^2} | A^{m_k}B^{n_k}\stackrel{r'_3}{\rightarrow}{q_k}\}$  i.e. produced by  cosets representatives ${A^{m_k}B^{n_k}}\in{\mathfrak{A}_k}$. Now, to each  such subset we may uniquely associate the subset ${\mathcal{L}_k}\subset{\mathbb{Z}^2}$ of the form:
 \begin{equation}
 \mathcal{L}_k={\{a_k+4\mathbb{Z}^2\}}\cup{\{a_k+(2,2)+4\mathbb{Z}^2\}}
 \end{equation}
 Using the correspondence $(m,n)\Leftrightarrow{m\omega_1+n\omega_2}$ as well as the symmetry properties of the lattice $L_0=[\omega_1,\omega_2]=\omega_1[1,\rho]$ expressed by $[1,\rho]=[1,\omega]$ for $\omega={\rho+1}=e^{\frac{i\pi}{3}}$ we obtain immediately that each subset ${\mathcal{L}_k}\subset{\mathbb{Z}^2}$, $k\in{\{1,\ldots,8\}}$ determines a unique sublattice $\widetilde{\mathcal{L}_k}$ of $L_0$ given by
 \begin{equation}
 \widetilde{\mathcal{L}_k}={\widetilde{a}_k+\omega_1[4,2\omega]}\subset{L_0}, \qquad \widetilde{a}_k=s_{k}\omega_1+t_{k}\omega_2
 \end{equation} 
\end{proof}

We notice that the $K$-multiple (for $K=\frac{\pi}{2}\vartheta_3^2(0,\omega)$) of the lattice $[4,2\omega]$  gives the primitive periods of the function sinus amplitudis $sn(2Kz)$.  We will not pursuit this direction here. Instead of we will look at $\widetilde{\mathcal{L}_k}$ as the sublattice $L_k={\widetilde{a}_k+4L_0}$ of $L_0$  together with its halfpoints $\{h_k\}={\widetilde{a}_k+2(\omega_1,\omega_2)+4L_0}$. We see immediately that although the decomposition of $L_0$ into mutually disjoint subsets $\widetilde{\mathcal{L}_k}$, $k=\{1,\ldots,8\}$ is uniquely determined by $q_k$'s, the realization of each $\widetilde{\mathcal{L}_k}$ as a sublattice of $L_0$ (more precisely a $4$-dilate of $L_0$) together with appropriate half points is not a canonical one.  Namely we can do this in three distinct ways. To analyse the situation let us introduce the lattices:
\begin{gather}
\Lambda_3=[\omega_1^3,\omega_2^3]=4[\omega_1,\omega_2]\\
\Lambda_4=[\omega_1^4,\omega_2^4]=\begin{pmatrix}0&1\\-1&-1\end{pmatrix}\circ{\Lambda_3}=[\omega_2^3,-\omega_1^3-\omega_2^3]\\
\Lambda_2=[\omega_1^2,\omega_2^2]=\begin{pmatrix}-1&-1\\1&0\end{pmatrix}\circ{\Lambda_3}=[-\omega_1^3-\omega_2^3,\omega_1^3]
\end{gather}
Equivalently we could write $\Lambda_4=g\circ{\Lambda_3}$ and $\Lambda_2=g^2\circ{\Lambda_3}$, $g=ST$.  Although all these three lattices $\Lambda_i$'s, $i=2,3,4$ are equivalent, the realizations of each subsets $\widetilde{\mathcal{L}_k}$ by distinct $\Lambda_i$'s requires distinct half points of these lattices. Thus we have:
\begin{equation}
I.\quad  \widetilde{\mathcal{L}_k}={\{\widetilde{a}_k+\Lambda_3\}}\cup{\{\widetilde{a}_k+\frac{\omega_1^3+\omega_2^3}{2}+\Lambda_3\}}
\end{equation}
or
\begin{equation}
II.\quad \widetilde{L}_k={\{\widetilde{a}_k+\Lambda_4\}}\cup{\{\widetilde{a}_k+\frac{\omega_2^4}{2}+\Lambda_4\}}
\end{equation}
or
\begin{equation}
III.\quad \widetilde{\mathcal{L}_k}={\{\widetilde{a}_k+\Lambda_2\}}\cup{\{\widetilde{a}_k+\frac{\omega_1^2}{2}+\Lambda_2\}}
\end{equation}
We observe that the realizations $I,II$ and $III$ are associated  to the pairs $(Q_8,I)$,  $(Q_8,g)$ and to $(Q_8,g^2)$ respectively.

\section{Conclusions}
We have seen that we needed both homomorphisms, modulo 2 and modulo 3, to find that the Weierstrass functions $\wp$ and $\wp'$ on $p(\textit{H})\cong{\mathbb{C}-L_0}$ have liftings to $\textit{H}$  given by the absolute invariants ${\textit{J}_n(\tau)}={(\textit{J}(\tau))^{\frac{1}{3}}}$ (for $\Gamma^{+}_{ns}(3)$) and ${\textit{J}_c(\tau)}={(\textit{J}(\tau)-1)^{\frac{1}{2}}}$, (for $\Gamma_c$) respectively. Further, we have obtained that the homomorphism $r'_3$ determines  the decomposition of $\Gamma'$ into subsets ${\mathcal{A}_k}={{r'}_{3}^{-1}(q_k)}$,  $k=1,\ldots,8$ (equivalently into the  cosets  of a normal subgroup ${\Gamma'}\cap{\Gamma(3)}$ in $\Gamma'$).  Then we decomposed each ${\mathcal{A}_k}\subset{\Gamma'}$ as ${\mathcal{A}_k}={\mathfrak{A}_k}\cup{\mathfrak{B}_k}$  according to $(5.6)$ and $(5.7)$. We have noticed that the set of pairs $(m_k,n_k)\in{\mathbb{Z}^2}$ with ${A^{m_k}B^{n_k}}\in{\mathfrak{A}_k}$ determines a sublattice $\widetilde{\mathcal{L}_k}$ of the lattice ${L_0}\cong{\mathbb{Z}^2}$.  Although the sublattice $\widetilde{\mathcal{L}_k}$ is not a dilate of  $L_0$  we may view it  as given by a  lattice equivalent to $L_0$ together with  the set of all of its appropriate half points. Such realization is not a  canonical one  and we have three ways, $I,II$ and $III$, to do this. In other words, we obtain the decomposition of $L_0$ into eight disjoint subsets, ${L_0}={\bigcup_{k=1}^{8}\widetilde{\mathcal{L}_k}}$,  each of which can be seen as
\begin{equation}
{\widetilde{\mathcal{L}_k}}={\{\widetilde{a}_k+\Lambda_l\}}\cup{\{\widetilde{a}_k+h(l)+\Lambda_l\}} \qquad   k=1,\ldots,8
\end{equation}
for $l=2,3,4$ (here ${\Lambda_l}={g^l\circ\Lambda_3}$ and $h(l)$ is an appropriate, depending on $l$, half-point of $\Lambda_l$). All lattices $\Lambda_l$'s are $4$-dilates  of $L_0$  and the essential differences between $I,II$ and $III$ lie in the different positions of half-points.
 These three realizations correspond to the elements of the group ${\left\langle g\right\rangle}<{SL_{2}\mathbb{Z}}$ involved in the formulae $(5.13)-(5.15)$. More precisely, although the three lattices ${\Lambda_l}=[\omega^l_1,\omega^l_2]={g^l\circ{\Lambda_3}}$, $l=2,3,4$   coincide (and  are all  4-dilates of the lattice $L_0$) the fact that $g\notin{\Gamma(2)}$ implies that the half-points of the lattices $g^{l}\circ{\Lambda_3}$'s are not preserved.  Since ${r_3(\left\langle g\right\rangle)}\cong{SL_2(3)/Q_8}$ we may view  the group ${Q_8}\triangleleft{SL_2(3)}$ as producing the decomposition ${L_0}=\bigcup_{k=1}^8{\widetilde{\mathcal{L}}_k}$ and we may view the quotient $SL_2(3)/{Q_8}$  (which is associated to the symmetries of the lattice $L_0$ described by the cyclic group $\left\langle g\right\rangle$) as responsible for the three realizations given by $(5.16), (5.17)$ and $(5.18)$ respectively. We see that:
\begin{itemize}
\item The realization $I$ is associated to $(\Lambda_3,\frac{\omega^3_1+\omega^3_2}{2})$ and involves the half points that correspond to the zeros of $\vartheta_3(v,\rho)$
\item The realization $II$ is associated to $(\Lambda_4,\frac{\omega^4_2}{2})$ and involves the half poins that correspond to the zeros of $\vartheta_4(v,\rho)$
\item The realization $III$ is associated to $(\Lambda_2,\frac{\omega^2_1}{2})$ and  involves the half points that correspond to the zeros of $\vartheta_2(v,\rho)$
\end{itemize}

Here $v=\frac{z}{4\omega_1}$, $L_0=[\omega_1,\omega_2]$,  $z=p(\tau)$ for $\tau\in\textit{H}$ and we use exactly the same subindex $l$ for a lattice $\Lambda_l$  and for the corresponding even theta function. Moreover, the relations $\Lambda_4=g\circ\Lambda_3$ and $\Lambda_2=g^{2}\circ\Lambda_3$ are parallel to the following relations respectively:
\begin{equation*}
{\vartheta_4^8(0,\tau)}={(j_g(\tau))^{-4}\vartheta^8_3(0,g\tau)}, \qquad {\vartheta^8_2(0,\tau)}={(j_{g^2}(\tau))^{-4}\vartheta^8_3(0,g^{2}\tau)}
\end{equation*}

Now, the global section $L(x')$ of lattices over $\textbf{T}^{*}$ introduced earlier leads to the fiber space over $\textbf{T}^{*}$ whose fiber at any point ${x'}\in{\textbf{T}^{*}}$ is a complex torus $\textbf{T}_{x'}$ given by $\mathbb{C}/L(x')$ and  attached to $x'$ at the origin. However although for each point $x'\in{\textbf{T}^{*}}$ the lattice $L(x')$ is well defined it is not equipped with any concrete basis and hence its half points are determined only up to permutations. The situation will change when we restrict ourselves to a single map $(U_\alpha,\tau_\alpha)$, that is  to  $x'\in{U_\alpha}\subset{\textbf{T}^{*}}$. Now we can write
\begin{equation*}
L(x')=L(\tau_{\alpha})=\mu(\tau_{\alpha})[1,\tau_{\alpha}]=[\omega^{\alpha}_1,\omega^{\alpha}_2]
\end{equation*}
and the half points are given by $h_1^{\alpha}=\frac{\omega^{\alpha}_1}{2}$, $h^{\alpha}_2=\frac{\omega^{\alpha}_2}{2}$ and by  $h^{\alpha}_3=\frac{\omega^{\alpha}_1+\omega^{\alpha}_2}{2}$ respectively. Since the decomposition of $L_0$ corresponds to the decomposition of $\mathbb{Z}^{2}$ given by $(5.11)$ the realization $I$ defines the decompositions of each  $L(\tau_{\alpha}(x'))$,  $x'\in{U_{\alpha}}$ onto eight sublattices together with their half points as follows:
\begin{equation}
{\widetilde{\mathcal{L}}_k^{\alpha}}={\{\widetilde{a^{\alpha}_k}+4L(\tau_{\alpha})\}}\cup{\{\widetilde{a}^{\alpha}_k+4h^{\alpha}_3+4L(\tau_{\alpha})\}},  \quad k=1,\ldots,8
\end{equation}

where ${\widetilde{a^{\alpha}_k}}={s_{k}\omega^{\alpha}_1}+t_{k}\omega^{\alpha}_2$. Each $\widetilde{\mathcal{L}}^{\alpha}_k$ produces torus isomorphic to $\textbf{T}(\tau_{\alpha}(x'))$ attached at the origin to $x'$ together with well defined half-point on it (corresponding to the zero of ${\vartheta_3(z,\tau_{\alpha})}:{U_{\alpha}}\times{\mathbb{C}}\rightarrow{\mathbb{C}}$). Since $k=1,\ldots,8$,  we may consider (on a set $U_{\alpha}$) the field $(4L(\tau_{\alpha}),4h^{\alpha}_3+4L(\tau_{\alpha}))^{\otimes8}$  and hence we  naturally obtain the function $\vartheta^8_3(0,\tau_{\alpha})$ (on $U_{\alpha}$).  Similarly, starting with the realization $II$ or $III$ we arrive to the functions $\vartheta^8_4(0,\tau_{\alpha})$ or to $\vartheta^8_2(0,\tau_{\alpha})$  respectively. None of these functions $\vartheta^8_l(0,\tau_{\alpha})$, $l=2,3,4$ can be  (using the atlas $\{(U_{\alpha},\tau_{\alpha})\}_{\alpha}$ ) extended to the whole $\textbf{T}^{*}$ to define any meaningful object on it.

 The existence of the three pictures $I,II$ and $III$ of each $\widetilde{L}_k$, $k=1,\ldots,8$ comes from the symmetry properties of the lattice ${p(\Gamma'{}\infty)}={L_0}$. Since the group $\left\langle g\right\rangle$ is responsible  for the existence of these three realizations  we may naturally involve the Hecke operators $T_{\left\langle g\right\rangle,k}$  introduced in the subsection $(3.2)$.  Thus, for  $l=2,3,4$ on each ${U_{\alpha}}\subset{\textbf{T}^{*}}$ we obtain
\begin{equation*}
 {T_{\left\langle g\right\rangle,4}\vartheta^8_l(0,\tau_{\alpha})}={\vartheta^8_3(0,\tau_{\alpha})+\vartheta^8_4(0,\tau_{\alpha})+\vartheta^8_2(0,\tau_{\alpha})}
\end{equation*} 

 Since for $x'\in{U_{\alpha}\cap{U_{\beta}}}$ we have  ${\tau_{\beta}(x')}={\gamma\tau_{\alpha}(x')}$  for some ${\gamma=\begin{pmatrix}a&b\\c&d\end{pmatrix}}\in{\Gamma'}$ and 
\begin{equation*}
{T_{\left\langle g\right\rangle,4}\vartheta^8_l(0,\tau_{\beta})}={(c\tau_{\alpha}+d)^{4}T_{\left\langle g\right\rangle,4}\vartheta^8_l(0,\tau_{\alpha})}
\end{equation*}
  the family ${\{T_{\left\langle g\right\rangle,4}\vartheta^8_l(0,\tau_{\alpha})\}}_{\alpha}$ forms well defined quadratic differential on $\textbf{T}^{*}$ which is exactly the same  for each $l=2,3,4$.  The necessity of applying on $U_{\alpha}$ the Hecke operator $T_{\left\langle g\right\rangle,4}$ to $\vartheta^8_l(0,\tau_{\alpha})$  (or equivalently, the necessity of taking equally weighted sum $\sum_{l=2}^{4}\vartheta^8_l(0,\tau_{\alpha})$ on $U_{\alpha}$) reflects the fact that each one of these three realizations is equally important.  Thus, the explicite forms of the Thetanullverte $\vartheta_l(0,\tau_{\alpha})$, $l=2,3,4$ on $U_{\alpha}$ (which result from these all three realizations)  provide
  \begin{equation}
 {T_{\left\langle g\right\rangle,4}\vartheta^8_l(0,\tau_{\alpha})}={\sum_{\underline{n}\in{\mathbb{Z}^8}}q_{\alpha}^{\underline{n}^2}+\sum_{\underline{n}\in{\mathbb{Z}^8}}(-1)^{\underline{n}\cdot{\underline{1}}}q_{\alpha}^{\underline{n}^2}+\sum_{\underline{n}\in{\mathbb{Z}^8}}q_{\alpha}^{(\underline{n}-\underline{e})^2}}
 \end{equation}
 and is further  equal to the following expresion
\begin{equation*}
{\sum_{\underline{n}\cdot\underline{1}\in{2\mathbb{Z}}}q_{\alpha}^{\underline{n}^2}+\sum_{\underline{n}\in{\mathbb{Z}^8}}q_{\alpha}^{(\underline{n}-\underline{e})^2}}=2\Theta_{E_8}(\tau_{\alpha})
\end{equation*}  

Here ${q_{\alpha}}={e^{i\pi\tau_{\alpha}}}$, ${\underline{e}}=(\frac{1}{2},\frac{1}{2},\ldots,\frac{1}{2})\in{\mathbb{Q}^8}$, and ${\underline{1}}=2\underline{e}$.

Another argument which leads to the sum of all $\vartheta^8_l(0,\tau_{\alpha})$, $l=2,3,4$ comes from the subsection $(3.2)$. Namely, for any holomorphic atlas on ${Y(2)}\cong{\textit{H}/\Gamma(2)}$ the transition functions preserve (pointwise) all half-points of each nonsingular fiber of the modular elliptic surface over $X(2)$. Hence ${\vartheta^8_3(0,\tau)}={\sum_{\underline{n}\in{\mathbb{Z}^8}}q^{\underline{n}^2}}$  is a $\Gamma(2)$-automorphic form of weight $4$.  Roughly speaking, the existence of a global section of half-points over the moduli space $Y(2)$  allows us to consider only the first part of the right side of $(6.3)$ wich contains only the lattice $\mathbb{Z}^8$. When we pass to the moduli space $\textbf{T}^{*}$ of complex tori, it is no longer possible and (on each $U_{\alpha}$) we must also involve the remaining terms of the left side of $(6.3)$, that is, we must consider the lattice $E_8$ instead of merely $\mathbb{Z}^8$ as for $Y(2)$.

Summerizing, the occurence of the $E_8$-symmetry related to the moduli space $\textbf{T}^{*}$ can be seen as a consequence of the relation between $\Gamma'$ and $Q_8$ coming from the modulo $3$ homomorphism and of the existence of the equally important three realizations of the decomposition of the lattice $L_0$ into 8 mutually disjoint subsets.

Moreover, from local relations  $\widetilde{H}_4(\vartheta^8_3(\tau)d\tau^2)\cong{const}\wp(z)dz^2$ coming off the subsection $(3.2)$  we obtain 
\begin{equation}
u(\tau_{\alpha})={\wp(p(\tau_{\alpha}),L_0)}={{const}\cdot\frac{\Theta_{E_8}(\tau_{\alpha})}{\eta^8(\tau_{\alpha})}} \qquad on\qquad U_{\alpha}
\end{equation}  

 and hence
 \begin{equation}
{\wp(p(\tau_{\alpha}),L_0)}={{const}{\cdot}q_{\alpha}^{-\frac{1}{3}}\sum_{n=0}^{\infty}\sum_{m=0}^{n}r_{E_8}(m)p_8(n-m)q_{\alpha}^n} 
 \end{equation}
 
 So, we may view the Weierstrass function on the  moduli curve  $\textbf{T}^{*}$ as a function which  encodes  the information about the decompositions of $L_0$.
 
 Let us notice the difference between the Jacobi  and our approach.  Although Jacobi forms involve both euclidean variable $z$ and hyperelliptic variable $\tau$ (in particular, the ratio of the Jacobi-Eisenstein forms of index $1$ and weight 10 and 12 respectively gives a constant multiple of the Weierstrass $\wp$-function for each $L_{\tau}$, $\tau\in{\textit{H}}$ ) in the Jacobi picture we must work with meromorphic functions on $\textit{H}\times{\mathbb{C}}$ satisfying some concrete conditions.  In our approach we simply translate the hyperbolic objects for $\Gamma'$, $\Gamma_c$, $\Gamma^{+}_{ns}(3)$, etc. into the euclidean objects on $\mathbb{C}-L_0$ and vice versa and this is a reason for the appearance of 8 sublattices of $L_0$ together with their appropriate half-points and further the appearance of $\Theta_{E_8}(q_{\alpha})$ on $U_{\alpha}$.
 
 We have shown that the  bridge between the hyperbolic structure of the universal covering space $\textit{H}$  of ${\textbf{T}^{*}}\cong{\textit{H}/\Gamma'}$ and the euclidean structure of $\mathbb{C}-L_0$, (${\textbf{T}^{*}}\cong{\mathbb{C}-L_0/L_0}$) is given by the function $\eta^4(\tau)$.  Now, rewriting the formula $(6.4)$ as 
 \begin{equation}
 {\wp(p(\tau_{\alpha}),L_0)\eta^8(\tau_{\alpha})}=const\cdot\Theta_{E_8}(\tau_{\alpha})
 \end{equation} 

we may view $\eta^8(\tau_{\alpha})$ as a bridge between $2$-periodic,with respect to $L_0$, function $\wp$  and the theta function of  the lattice $E_8$  (which may be produced by  the  decomposition of $L_0$ into 8 sublattices together with appropriate half points in three distict ways respectively).

Let us also notice that the function $\eta^8(\tau)$ provides very strong interrelation between the groups $\Gamma$ and $\Gamma'$. It is expressed by the fact that the ring of modular forms for $\Gamma$, that is the ring ${\mathfrak{M}(\Gamma)}={\mathbb{C}[g_2(\tau),g_3(\tau)]}$ can be written as ${\mathfrak{M}(\Gamma)}={\mathbb{C}[\eta^8(\tau)u(\tau),\eta^8(\tau)u'(\tau)]}$ and hence can be given as:
\begin{equation}
{\mathfrak{M}(\Gamma)}={\mathbb{C}[\eta^8(\tau)\wp(p(\tau),L_0),\eta^8(\tau)\wp'(p(\tau),L_0)]}
\end{equation}

This tells us that although the generators $g_2(\tau)$ and $g_3(\tau)$ are algebraically independent they produce differentials $\wp(z)dz^2$ and $\wp'(z)dz^3$ respectively and hence we have some ``elliptic'' type of differential relation between them.

\begin{thebibliography}{15}

\bibitem {KMA10}Bugajska,K.,
\emph{About a moduli space of elliptic curves and the Golay code $G_{24}$},
submitted for publication

\bibitem{KMB10}Bugajska,K.,
\emph{Hidden symmetries and $j(\tau)$},
submitted for publication

\bibitem{JPS97}Serre,J.-P.,
\emph{Lectures on the Mordell-Weil theorem},
Vieweg,1997
 
\bibitem{ICH99}Chen,I.,
\emph{On Siegel's modular curve of level 5 and the class number one problem},
Journal of Number Theory,\textbf{74}~(1999),278-297

\bibitem{BB09}Baran,B.,
\emph{A modular curve of level 9 and the class number one problem},
Journal of Number Theory,\textbf{129}~(2009),715-728

\bibitem{KCH85}Chandrasekharan,K.,
\emph{Elliptic Functions},
Springer-Verlag,Berlin Heidelberg, 1985

\bibitem{ABV90}Venkov,A.,
\emph{Spectral theory of automorphic functions},
Kluwer Academic Publishers,1990

\bibitem{SNAG88}Nag,S.,
\emph{The complex analytic theory of Teichmueller spaces},
John Wiley and Sons, 1988

\bibitem{AS02}Sebbar,A.,
\emph{Modular subgroups, forms,curves and surfaces},
Canadian Mathematical Bulletin,\textbf{45}~(2002),294-308

\bibitem{HC81}Cohn,H.,
\emph{Minimal geodesics on Friecke's torus covering}
Annals of Mathematics Studies,Princeton University Press,\textbf{97},(1981),73-85

\end {thebibliography} 
\end{document}